\definecolor{mgt}{RGB}{238,28,189}
\newcommand{\TM}{\text{\rm\tt M}}
\title[Undecidability of theories of semirings with fixed points]{Undecidability of theories of semirings \\ with fixed points}
\author{Anupam Das$^1$}
\address{$^1$University of Birmingham, UK}
\email{$^1$a.das@bham.ac.uk}
\author{Abhishek De$^2$} 
\address{$^2$Krea University, India}
\email{$^2$abhishek.de@krea.edu.in}
\author{Stepan L.\ Kuznetsov$^3$}
\address{$^3$Steklov Mathematical Institute of RAS, Russia}
\email{$^3$skuzn@inbox.ru}
\date{\today}
\renewcommand{\phi}{\varphi}
\renewcommand{\emptyset}{\varnothing}
\renewcommand{\epsilon}{\varepsilon}
\newcommand{\defname}[1]{\textbf{{#1}}}
\newcommand{\df}{:=}
\newcommand{\Pow}{\mathcal P}
\newcommand{\pow}[1]{\Pow(#1)}
\newtheorem{theorem}{Theorem}
\newtheorem{lemma}[theorem]{Lemma}
\newtheorem{proposition}[theorem]{Proposition}
\newtheorem{corollary}[theorem]{Corollary}
\newtheorem{fact}[theorem]{Fact}
\theoremstyle{definition}
\newtheorem{definition}[theorem]{Definition}
\newtheorem{example}[theorem]{Example}
\newtheorem{remark}[theorem]{Remark}
\newcommand{\Nat}{\mathbb{N}}
\newcommand{\purple}[1]{{\color{purple}#1}}
\newcommand{\Alphabet}{\mathcal{A}}
\newcommand{\Var}{\mathcal{V}}
\newcommand{\proves}{\vdash}
\newcommand{\Lang}{\mathcal L}
\newcommand{\lang}[1]{\Lang(#1)}
\newcommand{\id}{\mathsf{init}}
\newcommand{\limp}{\rightarrow}
\newcommand{\basicax}{\mu\mathsf{S}}
\newcommand{\idem}{\mathsf{idem}}
\newcommand{\ind}{\mathsf{lfp}}
\newcommand{\N}{\mathbb N}
\newcommand{\regtot}{\mathbf R}
\newcommand{\tot}{\mathbf T}
\newcommand{\nottot}{\bar\tot}
\newcommand{\loops}{\mathbf{C}}
\newcommand{\halts}{\mathbf{H}}
\newcommand{\total}[1]{\mathsf{Tot}(#1)}
\newcommand{\basicaxidfree}{\basicax_{\not {\hspace{.1em}1}}}
\renewcommand{\id}[1]{#1^i}
\newcommand{\dom}[1]{\mathsf{dom}(#1)}
\newcommand{\prf}[1]{f_{#1}}
\newcommand{\res}[1]{W_{#1}}
\newcommand{\inv}[1]{#1^{-1}}
\begin{document}

\begin{abstract}
    In this work we prove the undecidability (and $\Sigma^0_1$-completeness) of several theories of semirings with fixed points. 
    The generality of our results stems from recursion theoretic methods, namely the technique of \emph{effective inseperability}.
    Our result applies to many theories proposed in the literature, including Conway $\mu$-semirings, Park $\mu$-semirings, and Chomsky algebras.
\end{abstract}

\maketitle

\section{Introduction}

The \emph{context-free languages} (CFLs) may be presented as the least solutions to certain systems of algebraic inequalities, by construing \emph{context-free grammars} (CFGs) as such. 
It is well known that the resolution of such simultaneous fixed points may be reduced to unary ones, by way of \emph{Beki\'c's Lemma}, \cite{Bekic84}.
Indeed already in the early '70s
Gruska observed that expressions for CFLs could be obtained by an extension of usual regular expressions by an `iteration' operator for computing (least) fixed points.
Nowadays these are known as \emph{$\mu$-expressions} \cite{Leiss92:ka-with-recursion,EsikLeiss02,EsikLeiss05}.\footnote{Here the notation `$\mu$' insists that the fixed points named in the syntax are \emph{least}, instead of, say, greatest or even arbitrary fixed points.}

Algebraic theories of regular expressions are based on (idempotent) semirings with an appropriate $*$ operation, for instance \emph{Kleene algebras} (cf., e.g., \cite{Kozen94}). 
Duly, analogous structures for $\mu$-expressions are based on (idempotent) semirings with \emph{fixed points}.
Similarly to regular expressions, in the last few decades
a number of formulations of semirings with {fixed points} have appeared throughout the literature in formal language theory, logic, and algebra \cite{Courcelle86,Leiss92:ka-with-recursion,EsikLeiss02,EsikLeiss05,Hopkins08:alg-approach-1,Hopkins08:alg-approach-2,EsikKuich12:mod-aut-theory,GHK13,Leiss16}.

However until now, as far as we know, the (un)decidability of most of these equational theories has remained open.
This is in stark contrast to the situation for Kleene algebras, over regular expressions, where the complexity of equational and Horn theories, with and without continuity, is now well understood (see, e.g., \cite{Kozen02:complexity-ka,DKPP19:ka-with-hyps}).
In this work we address this situation by showing undecidability (in fact $\Sigma^0_1$-completeness) for a range of theories of (not necessarily continuous) semirings with fixed points, including many of the ones occurring in the literature, in particular from \cite{Leiss92:ka-with-recursion,EsikLeiss02,EsikLeiss05,GHK13}.

\subsection{Previous work}
Lei\ss\ investigated several classes of idempotent semirings with fixed points, all of which satisfy the \emph{induction axioms} characterising $\mu x e(x)$ as the least prefixed point of the operation $x\mapsto e(x)$. 
This interpretation of least fixed points is recovered from the famous Knaster-Tarski fixed point theorem, and is the standard way to present axiomatic theories with forms of (co)induction.
\'Esik and Lei\ss\ showed that reduction of CFGs to Greibach normal form \cite{Greibach1965} can be validated in a purely equational theory of semirings with fixed points (not necessarily satisfying induction), \emph{Conway $\mu$-semirings} \cite{EsikLeiss02,EsikLeiss05}. 

Grathwohl, Henglein and Kozen gave a sound and complete (infinitary) axiomatisation of CFL inclusions by way of $\mu$-expressions \cite{GHK13} (note that this problem is $\Pi^0_1$-complete). 
In particular they introduced the notion of a \emph{Chomsky algebra}, an algebraically closed idempotent semiring.
These turn out to be nothing more than idempotent Park $\mu$-semirings of \cite{Leiss92:ka-with-recursion,EsikLeiss02,EsikLeiss05}, where $\mu$-expressions give a naming convention for solutions to polynomial systems.
\cite{GHK13} showed that the theory of \emph{$\mu$-continuous} Chomsky algebras is sound and complete for CFL inclusions.

Later further metalogical results were established for Chomsky algebras and friends.
For instance Lei\ss\ showed that the matrix ring of a $\mu$-continuous Chomsky algebra is again $\mu$-continuous.
Two of the authors of the present work recovered the completeness result of \cite{GHK13} via proof theoretic means, even establishing a stronger `cut-free' completeness result \cite{DasDe24:omega-cfls}. 
In the same work they also proposed \emph{non-wellfounded} systems for both CFLs, via $\mu$-expressions, and an extension to $\omega$-words, using \emph{greatest fixed points}.

\subsection{Contributions}
In this work we show undecidability, in fact $\Sigma^0_1$-completeness, for a range of theories of semirings with fixed points. We give a basic equational axiomatisation $\basicaxidfree$ of semirings (not necessarily with identity) with a $\mu$ operator denoting (not necessarily least) fixed points.
In particular this theory is more general (i.e.\ weaker) than the Conway $\mu$-semirings, Park $\mu$-semirings and Chomsky algebras from earlier works.
Our main result is that \emph{any} recursively enumerable extension of $\basicaxidfree$ sound for the usual language model is undecidable, and in fact $\Sigma^0_1$-complete.
In particular Conway $\mu$-semirings, Park $\mu$-semirings and Chomsky algebras have undecidable equational theories.

To achieve such generality we employ methods from recursion theory, exploiting the technique of \emph{effective inseparability}. 
These methods were successfully employed in earlier works by one of the authors, in particular showing the undecidability of the equational theory of action lattices \cite{Kuznetsov19:action-logic,Kuznetsov2021TOCL}, a longstanding open problem, and the same for commutative action lattices \cite{Kuznetsov2023JLC}. 

\subsection{Structure of the paper}
In \cref{sec:prelims} we introduce a basic theory $\basicax$ of semirings (with identity) with fixed points, as well as other theories from the literature. 
In \cref{sec:eff-insep} we prove the effective inseparability of two problems on CFGs, `non-totality' $\nottot$ and `regular totality' $\regtot$, by reduction to a more well known effectively inseparable pair, `halting' and `looping' Turing machines.
In \cref{sec:undec-with-idem} we show the undecidability of any r.e.\ extension of $\basicax + {}$\emph{idempotency}, $e + e = e$, sound for the language model.
In \cref{sec:undec-without-idem} we improve this result to theories without identity and idempotency, by a sort of interpretation of idempotency via fixed points. 
Finally we make some concluding remarks in \cref{sec:concs}.

\newpage






\section{Some theories of semirings with fixed points}
\label{sec:prelims}

Let $\Alphabet$ be a finite alphabet and $\Var$ be a countably infinite set of variables disjoint from $\Alphabet$. 
We consider \defname{$\mu$-expressions} $e,f,$ etc.\ generated by the following grammar:
\[
e,f,\dots \quad ::= \quad
0 \quad \mid \quad 1 \quad \mid \quad a \in \Alphabet \quad \mid \quad x\in\Var \quad \mid \quad e+f \quad \mid \quad e\cdot f \quad \mid \quad \mu x e
\]
We usually just write $ef$ instead of $e\cdot f$.

Structures interpreting $\mu$-expressions, formally, must be defined appropriately taking account of variable binding and compatible with substitution. 
A formal exposition can be found in e.g.\ \cite{EsikLeiss05,Leiss16}, so here we give only examples:

\begin{example}
    [Languages]
    \label{language-model}
    Temporarily expand the syntax of expressions to include each language $A\subseteq \Alphabet^*$ as a new constant symbol.
    We associate to each closed $\mu$-expression $e$ (of this expanded syntax) a language $\lang e \subseteq \Alphabet^*$ as follows:
    \[
    \begin{array}{r@{\ \df \ }l}
         \lang 0 & 0 \\
         \lang 1 & \{\epsilon\} \\
         \lang a & \{a\} \\
         \lang A & A\\
         \lang {e+f} & \lang e \cup \lang f \\
          \lang {ef} & \{ uv : u \in \lang e \ \& \ v \in \lang f\} \\
         \noalign{\medskip}
          \lang {\mu x e(x)} & \bigcap\{ A : \lang {e(A)} \subseteq A \}
    \end{array}
    \]
\end{example}
Every symbol is interpreted by $\lang\cdot$ as a monotone operation, with respect to $(\pow {\Alphabet^*}, \subseteq)$.
The \emph{Knaster-Tarski} fixed point theorem tells us that monotone operations on complete lattices have least fixed points,\footnote{Indeed, they further have a complete sublattice of fixed points, in particular including a greatest fixed point.} in particular with the least fixed point of $A \mapsto e(A)$ given by $\lang e $ defined above: the intersection of all \emph{prefixed points} of $A \mapsto e(A)$.

It is not hard to see that the image of $\lang \cdot$ on $\mu$-expressions is just the context-free languages (CFLs) \cite{Gruska71:cfls-by-mu-exprs}.
This structure will form a model of all theories considered herein, and will guide our main undecidability results.
Let us point out already the following consequence:

\begin{proposition}
[See, e.g., \cite{HopcroftUllman79:book,Sipser96:book}]
    Deciding $\lang e \subseteq_? \lang f$ is $\Pi^0_1$-complete.
\end{proposition}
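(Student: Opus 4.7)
The plan is to establish both the $\Pi^0_1$ upper bound and $\Pi^0_1$-hardness. For the upper bound, I would invoke the decidability of CFL membership: since every $\mu$-expression $e$ can be converted to a context-free grammar $G_e$ generating $\lang e$ (for instance by applying Beki\'c's Lemma to resolve nested fixed points into a simultaneous system of polynomial inequalities), and since membership in a CFL is decidable (e.g., via the CYK algorithm), the non-inclusion statement $\lang e \not\subseteq \lang f$ is $\Sigma^0_1$: it is witnessed by a word $w \in \Alphabet^*$ with $w \in \lang e$ and $w \notin \lang f$, and an effective search over $\Alphabet^*$ suffices. Hence $\lang e \subseteq_? \lang f$ is $\Pi^0_1$.

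For the lower bound, I would reduce from the universality problem for CFLs, ``$\lang g =_? \Alphabet^*$'', which is classically $\Pi^0_1$-complete. The reduction is immediate: letting $e_* \df \mu x\bigl(1 + \sum_{a \in \Alphabet} a \cdot x\bigr)$, so that $\lang{e_*} = \Alphabet^*$, we have $\lang g = \Alphabet^*$ iff $\lang{e_*} \subseteq \lang g$. Thus $\Pi^0_1$-hardness of universality transfers to inclusion.

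The main obstacle is therefore $\Pi^0_1$-hardness of CFL universality itself, which is the classical content covered in detail by the cited textbooks. This is typically shown by reducing from non-halting of Turing machines: given a TM $M$, one constructs a CFG $G_M$ over a suitable alphabet whose language consists precisely of the strings that are \emph{not} valid halting computation histories of $M$. A computation history is encoded as $C_0 \# C_1^R \# C_2 \# C_3^R \# \cdots \# C_n$, where alternate configurations are reversed; this reversal trick converts the check that $C_{i+1}$ follows from $C_i$ into a context-free (palindrome-like) condition based on local compatibility of tape cells. The set of ill-formed strings is then a finite union of such context-free conditions, hence itself context-free. One concludes that $M$ does not halt from its initial configuration iff $\lang{G_M}$ is universal, completing the reduction.
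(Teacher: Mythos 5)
Your proposal is correct and follows exactly the classical argument that the paper's citation to Hopcroft--Ullman and Sipser points to (the paper itself gives no proof): the $\Pi^0_1$ upper bound via decidable CFL membership and effective search for a counterexample word, and hardness by embedding CFL universality (itself $\Pi^0_1$-complete via the computation-histories construction) into inclusion against a universal $\mu$-expression. The only implicit step worth making explicit is that the effective translation between CFGs and $\mu$-expressions (Gruska/Beki\'c) is used in \emph{both} directions -- to get the upper bound for $\mu$-expressions from CFG algorithms, and to turn the hard CFG instance $g$ back into a $\mu$-expression -- but this is routine.
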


\subsection{Semirings with fixed points}
For generality of our results, we shall work with a minimal theory of semirings with fixed points, before considering specific examples such as (idempotent) Park $\mu$-semirings and Conway $\mu$-semirings later.

\begin{definition}
[Basic axioms]
    The equational theory $\basicax$ of $\mu$-expressions is given by all the following axioms:
    \begin{enumerate}[(i)]
    \item $(0,1,+,\cdot) $ form a semiring:
    \[
    \begin{array}{r@{\ = \ }l}
         e + (f+g) & (e+f)+g \\
         e+f & f + e\\
         e+0 & e 
    \end{array}
    \quad
    \begin{array}{r@{\ = \ }l}
         e (fg) & (ef)g \\
         e1 & e \\
         1 e & e \\
         e0 & 0 \\
         0e & 0
    \end{array}
    \quad
    \begin{array}{r@{\ = \ }l}
         e(f+g) & ef + eg \\
         (e+f)g & eg + fg
    \end{array}
    \]
    \item\label{item:fixed-point-axioms} $\mu x e(x)$ is a fixed point of $ x \mapsto e(x)$:
    \[
    \mu x e(x) = e (\mu x e(x))
    \]
    \end{enumerate}
\end{definition}

Note that $\basicax$ does \emph{not} insist that $\mu x e(x)$ is a \emph{least} fixed point, which would require non-equational axioms. 
Instead, notice that the axiomatisation above is finite and equational.

\begin{remark}
    [On $\mu$-congruence]
    \label{rem:mu-congruence}
    Note that $\basicax$ does not admit the \textbf{$\mu$-congruence} rule: $\vlinf{}{}{\mu x e(x) = \mu x f(x)}{e(x) = f(x)}$.
    This is often built into the definition of various classes of `$\mu$-semirings' in the literature \cite{EsikLeiss05,GHK13,Leiss16}, but we shall not exploit this here.
    This means that, even if two operators are (provably) equivalent, the fixed points named by their closure under $\mu$ may be different. 
    
    For instance we do not have in $\basicax$ that $\mu x (a+x) = \mu x (x+a)$.
    This is consistent with the fact that we are not axiomatising $\mu x e(x)$ to be a \emph{least} fixed point, but just \emph{some} fixed point.
    So, e.g., we can tweak the model $\Lang$ to satisfy $\mu x (a+x) =a $ but $\mu x (x+a) = a+b$.
    Of course only the former is the \emph{least} fixed point, but the resulting structure remains a model of $\basicax$.

    Construing expressions in a higher-order syntax, where $\mu$ is a higher-order constant rather than a binder, the lack of $\mu$-congruence may be seen as saying that $\mu$ does not behave \emph{extensionally}.
    In practice, in this work, this means we must be careful to not carry out reasoning \emph{under} a $\mu$ unless justified.
\end{remark}

\begin{remark}
    [Comparison to $\mu$-semirings]
    A \defname{$\mu$-semiring}, introduced in \cite{EsikLeiss05}, is defined just like $\basicax$ except:
    \begin{itemize}
        \item $\mu$-semirings must satisfy \emph{strong} $\mu$-congruence:
        $\forall x\,  e(x) = f(x) \limp \mu x e(x) = \mu x f(x)$; and,
        \item $\mu$-semirings do not require $\mu x e(x) $ to be a fixed point, i.e.\ axiom \eqref{item:fixed-point-axioms} is omitted.
    \end{itemize}
    Thus our system $\basicax$ is, strictly speaking, incomparable with general $\mu$-semirings, though it will still underlie extensions considered later.
     $\basicax$ should rather be considered a basic theory of `semirings with fixed points', namely the theory of those $\mu$-semirings in which $\mu x e(x)$ always denotes a fixed point of $x \mapsto e(x)$, albeit not necessarily the \emph{least} fixed point, and albeit not extensionally so (see previous \cref{rem:mu-congruence}).

     We omit $\mu$-congruence for two main reasons: (a) we do not need it to obtain our ultimate undecidability results, rendering them strictly more general; and (b) the axiomatisation $\basicax$ remains purely equational, and so could be of independent interest.
\end{remark}

We shall soon present $\mu$-semirings equipped with an order, so as to axiomatise the \emph{leastness} of fixed points denoted by $\mu$, in particular satisfying the `strong' congruence principle mentioned in the example above.
Before that let us recall a class of models of $\basicax$ that is covered by our results:

\begin{definition}
[\cite{EsikLeiss05}]
    A \defname{Conway $\mu$-semiring} is a $\mu$-semiring further satisfying:
    \begin{enumerate}
    \item\label{item:fp-partial-unfolding} $\mu x e(f(x)) = e(\mu x f(e(x)))$
        \item\label{item:fp-diagonal} $\mu x \mu y e(x,y) = \mu x e(x,x)$
    \end{enumerate}
\end{definition}

Note that the fixed point axiom \eqref{item:fixed-point-axioms} is already a consequence of \eqref{item:fp-partial-unfolding} above, by setting $f(x)\df x$, and so Conway $\mu$-semirings are also models of $\basicax$.
Conway $\mu$-semirings were introduced in \cite{EsikLeiss05} as a minimal theory in which certain normal forms of context-free grammars may be formally obtained, in particular \emph{Greibach} normal form. 
We shall not develop its metatheory any further here, as our results about Conway $\mu$-semirings depend only on the fact that their equational theory is recursively enumerable, extends that of $\basicax$, and is modelled by the structure of languages $\Lang$ from \cref{language-model}.

\subsection{Equational systems and context-free grammars}
Before considering further extensions of $\basicax$, let us revisit the interpretation of $\mu$-expressions as context-free languages.
It is known that Conway $\mu$-semirings can `solve' context-free grammars (CFGs), but we shall recast (and improve) this result within $\basicax$ here.

An \defname{equational system} is a set $\mathcal E (\vec X) = \{X_i = f_i(\vec X)\}_{i<n}$.
 By recursively applying \emph{Beki\'c's Lemma} (\cite{Bekic84}) we can solve such systems in $\basicax$:

\begin{proposition}
\label{eq-systems-have-solns-in-basicax}
    Fix an equational system $\mathcal E(\vec X) = \{X_i = f_i(\vec X)\}_{i<n}$. 
    There are (closed) expressions $\vec e $ s.t.\ $\basicax \proves \mathcal E(\vec e)$. 
\end{proposition}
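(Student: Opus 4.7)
The plan is to proceed by induction on $n$, following the classical strategy of \emph{Beki\'c's Lemma}: isolate one variable using $\mu$, substitute its named fixed point back into the remaining equations, and appeal to the inductive hypothesis. The only axiom of $\basicax$ required beyond ordinary equational reasoning is the fixed point schema \eqref{item:fixed-point-axioms}, together with capture-avoiding substitution of closed expressions for free variables.

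For the base case $n=1$, the system is $\{X_0 = f_0(X_0)\}$, and we set $e_0 \df \mu X_0\, f_0(X_0)$. The fixed point axiom then yields $\basicax \proves e_0 = f_0(e_0)$ directly. For the inductive step, write the $(n{+}1)$-variable system as
\[
\{ X_i = f_i(\vec X, Y) \}_{i < n} \;\cup\; \{\, Y = g(\vec X, Y)\,\}.
\]
Define $h(\vec X) \df \mu Y\, g(\vec X, Y)$, so that $\basicax \proves h(\vec X) = g(\vec X, h(\vec X))$ by the fixed point axiom. Substituting $h(\vec X)$ for $Y$ in the remaining equations yields the $n$-variable system $\{\, X_i = f_i(\vec X, h(\vec X))\,\}_{i < n}$. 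By the inductive hypothesis there are closed expressions $\vec e$ with $\basicax \proves e_i = f_i(\vec e, h(\vec e))$ for each $i < n$. Taking $e_Y \df h(\vec e)$, i.e.\ $\mu Y\, g(\vec e, Y)$, a substitution instance of the fixed point schema gives $\basicax \proves e_Y = g(\vec e, e_Y)$, and the $i$th equation is immediate from the inductive hypothesis since $h(\vec e)$ and $e_Y$ are literally the same expression. Thus $(\vec e, e_Y)$ solves the original $(n{+}1)$-variable system in $\basicax$.

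The main thing to watch is the absence of $\mu$-congruence in $\basicax$ (see \cref{rem:mu-congruence}), which forbids reasoning \emph{under} a $\mu$-binder. The argument above never does so: the fixed point axiom is applied only as a schema, instantiated at the specific expressions $\vec e$ after substitution, and every other step is plain equational reasoning outside any $\mu$. Hence the construction goes through in $\basicax$ itself, without appealing to any stronger $\mu$-semiring principle; in particular it supersedes the analogous result for Conway $\mu$-semirings by working in a weaker equational theory.
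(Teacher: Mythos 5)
Your proposal is correct and follows essentially the same route as the paper's proof: an induction on the number of variables in the style of Beki\'c's Lemma, isolating one variable with $\mu$, substituting its named fixed point into the remaining equations, invoking the inductive hypothesis, and finishing with an instance of the fixed point axiom \eqref{item:fixed-point-axioms}. The only cosmetic difference is that the paper takes $n=0$ as the (vacuous) base case rather than $n=1$, and your explicit remark that no reasoning occurs under a $\mu$-binder correctly addresses the concern raised in \cref{rem:mu-congruence}.
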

\begin{proof}
    By induction on $n$:
    \begin{itemize}
        \item When $n=0$ the statement is vacuously true.
        \item Fix an equational system $\{X_i = f_i(\vec X, X_n)\}_{i\leq n}$, where $\vec X = X_0, \dots, X_{n-1}$.
        Write $e_n'(\vec X) \df \mu X_n f_n (\vec X, X_n)$.
        Now, by inductive hypothesis, let $\vec e$ be solutions of $\{X_i = f_i(\vec X, e_n'(\vec X))\}_{i<n}$, i.e.\ such that $\basicax \proves e_i = f_i(\vec e, e_n'(\vec e))$, for all $i<n$.
        It suffices now to show that $e_n \df e_n'(\vec e) $ is a solution for $X_n$, i.e.\ $\basicax \proves e_n = f_n (\vec e, e_n)$, for which we reason as follows:
        \[
        \begin{array}{r@{\ = \ }ll}
             e_n & e_n' (\vec e) & \text{by definition of $e_n$} \\
                & \mu X_n f_n (\vec e, X_n) & \text{by definition of $e_n'$} \\
                & f_n (\vec e, \mu X_n f_n (\vec e, X_n)) & \text{by fixed point axiom \eqref{item:fixed-point-axioms}} \\
                & f_n (\vec e, e_n) & \text{by definition of $e_n$} \qedhere
        \end{array}
        \]
    \end{itemize}
\end{proof}

Recalling \cref{rem:mu-congruence}, note that solutions to an equational system are not necessarily unique, as $\basicax$ does not insist that $\mu$ computes \emph{least} fixed points.

Of course, we can use the above result to `solve' CFGs too.

\begin{corollary}
\label{cfgs-have-solutions-in-basicaxs}
    Fix a CFG $\{X_i \Rightarrow f_{ij} (\vec X)\}_{i<n, j}$.
    There are $\mu$-expressions $\vec e = e_0, \dots, e_{n-1}$ s.t.\ $\basicax \proves e_i = \sum_j f_{ij} ({\vec e})$ for all $i<n$.
\end{corollary}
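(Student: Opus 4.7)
The plan is to derive this as an immediate corollary of \cref{eq-systems-have-solns-in-basicax} by rephrasing the CFG as an equational system. Specifically, given productions $X_i \Rightarrow f_{ij}(\vec X)$ for $i < n$ and $j$ ranging over the (finite) set of productions from $X_i$, I would form the equational system
\[
\mathcal E(\vec X) \df \{X_i = \textstyle\sum_j f_{ij}(\vec X)\}_{i<n},
\]
where each finite sum $\sum_j f_{ij}(\vec X)$ is a well-formed $\mu$-expression (using associativity and commutativity of $+$ in $\basicax$ to fix an arbitrary ordering of the productions).

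Next, I would apply \cref{eq-systems-have-solns-in-basicax} directly to $\mathcal E$, obtaining $\mu$-expressions $\vec e = e_0, \dots, e_{n-1}$ such that $\basicax \proves \mathcal E(\vec e)$. Unpacking, this gives $\basicax \proves e_i = \sum_j f_{ij}(\vec e)$ for all $i < n$, which is exactly the desired conclusion.

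There is no real obstacle here; the corollary is a routine rephrasing. The only ingredient worth flagging is that CFGs are finitely branching at each nonterminal, which is what makes the sums above expressible as genuine $\mu$-expressions in the fixed signature (an infinite $\sum_j$ would not be). Beyond that, all the substantive work is done by the preceding proposition, whose inductive proof implicitly invokes Beki\'c's Lemma to eliminate variables one at a time and so in general produces nested $\mu$-expressions $\vec e$.
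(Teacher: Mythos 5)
Your proof is correct and is exactly the paper's intended argument: the corollary is obtained by packaging each nonterminal's productions into a single equation $X_i = \sum_j f_{ij}(\vec X)$ and invoking \cref{eq-systems-have-solns-in-basicax} (the paper gives no further detail, simply remarking that the result follows from the proposition). Your added observations about finite branching and the nested $\mu$-expressions produced by Beki\'c-style elimination are accurate and consistent with the paper.
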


\subsection{Ordered semirings with fixed points}
Let us continue to recall certain classes of semirings over $\mu$-expressions from the literature.

    A semiring is \defname{idempotent} if it satisfies:
    \[
    \idem\ : \quad  e + e = e
    \]
We shall write $e\leq f \df e+f = f$,
typically in the presence of idempotency.
Note $\leq$, sometimes called the `natural order', defines a partial order under $\basicax + \idem$, and moreover one over which the semiring operations are monotone:

\begin{proposition}
\label{props-of-nat-order}
    The following hold in all idempotent semirings, and so in particular are consequences of $\basicax + \idem$:
\begin{equation}
    \label{eq:mon-po}
    \begin{array}{c}
     i\leq i \\
     e \leq f \  \& \ f \leq e \implies e = f \\
     e \leq f \ \& \  f \leq g \implies e \leq g
\end{array}
\qquad
\begin{array}{l}
e\leq f \implies e+i \leq f+i \\ 
e\leq f \implies eg \leq fg \\
e \leq f \implies ge \leq gf
\end{array}
\end{equation}
Moreover these properties hold in \emph{any} semiring, as long as $i$ is idempotent, and in particular are consequences of $\basicax + (i+i=i)$.
\end{proposition}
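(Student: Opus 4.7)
The plan is to unfold $e \leq f$ to its definition $e + f = f$ everywhere and then grind through each clause by pure semiring manipulation, observing along the way exactly which uses of idempotency are invoked. The strategy is to prove the stronger ``moreover'' version directly: for each clause, show that the only fact about the distinguished element $i$ that is ever used is $i + i = i$. Then the original version with $\idem$ follows as a special case, since $\idem$ makes every element idempotent.

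First I would treat the order axioms. Reflexivity $i \leq i$ unfolds to $i + i = i$, which is exactly the hypothesis on $i$ (this is the only place reflexivity uses anything). Antisymmetry follows from commutativity alone: if $e + f = f$ and $f + e = e$, then $e = f + e = e + f = f$. Transitivity follows from associativity: if $e + f = f$ and $f + g = g$, then $e + g = e + (f + g) = (e + f) + g = f + g = g$. Neither antisymmetry nor transitivity needs idempotency of any element.

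Next I would handle the three monotonicity clauses. For $e \leq f \Rightarrow eg \leq fg$, compute $eg + fg = (e+f)g = fg$ using right distributivity; left multiplication is symmetric. Neither direction requires any idempotency. For $e \leq f \Rightarrow e + i \leq f + i$, one computes $(e + i) + (f + i) = (e + f) + (i + i) = f + i$ using commutativity, associativity, the hypothesis $e + f = f$, and the single instance $i + i = i$. This is the only clause among the monotonicity properties that depends on $i$ being idempotent.

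Having established the refined ``moreover'' statement, the original statement is immediate: under $\basicax + \idem$ every element is idempotent, so in particular every $i$ appearing above satisfies $i + i = i$. There is no real obstacle — the proof is a sequence of routine semiring calculations — but I would be careful to present the arguments so that the dependency on $i + i = i$ is visible only where it is actually used, so that the sharper version with $\basicax + (i+i=i)$ falls out with no extra work. This sharpening will presumably matter later when idempotency is not assumed globally but is simulated pointwise via fixed points, as flagged in the paper's outline for \cref{sec:undec-without-idem}.
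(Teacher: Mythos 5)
Your proof is correct and is exactly the routine unfold-and-compute argument the paper intends (the paper in fact omits the proof of this proposition entirely, treating it as immediate). Your careful bookkeeping of where $i+i=i$ is used — only for reflexivity of $i$ and for monotonicity of $+i$ — is precisely what justifies the ``moreover'' clause and what the paper later relies on in \cref{sec:undec-without-idem}.
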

\begin{corollary}
\label{intro-elim-props-for-+}
    The following hold in all idempotent semirings, and so in particular are consequences of $\basicax + \idem$:
    \begin{equation}
        \label{eq:props-of-nat-ord}
    \end{equation}
    Moreover these properties hold in \emph{any} semiring, as long as $i$ is idempotent, and in particular are consequences of $\basicax + (i+i=i)$.
\end{corollary}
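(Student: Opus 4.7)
The plan is to derive each of the listed properties directly from the axioms by unfolding the definition $e \leq f \df e+f = f$ and then relying only on associativity, commutativity and idempotency of $+$. Since all the reasoning happens at the level of sums and never under a $\mu$, I would not need to worry about the subtleties discussed in \cref{rem:mu-congruence}.

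For the ``introduction'' direction $e \leq e+f$, the computation $e + (e+f) = (e+e)+f = e+f$ suffices: the first equality is associativity, the second idempotency. The symmetric inequality $f \leq e+f$ follows by first invoking commutativity to rewrite $e+f$ as $f+e$ and then repeating the same argument. For the ``elimination'' direction, assuming $e \leq g$ and $f \leq g$, i.e.\ $e+g = g$ and $f+g = g$, I would compute $(e+f) + g = e + (f+g) = e + g = g$, which yields $e+f \leq g$. These are of course all steps already licensed inside $\basicax + \idem$, and together they state exactly that $(+)$ is a binary join with respect to the natural order $\leq$.

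For the ``moreover'' clause I would inspect each of the derivations above and observe that idempotency is always applied to a single, specific element rather than used as a global axiom schema. Substituting $i$ for that element in each step shows that the hypothesis $i+i = i$ by itself is enough. I do not foresee a genuine obstacle: this is essentially the folklore argument that an idempotent commutative monoid is a join-semilattice under its natural order, and the only care required is accounting precisely for where idempotency is used so that the element-local sharpening in the ``moreover'' part comes out immediately.
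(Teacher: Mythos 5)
Your proposal is correct, and it supplies exactly the computations the paper leaves implicit: the corollary is stated without proof (indeed, the displayed equation \eqref{eq:props-of-nat-ord} is empty in the source), but from its label and its later uses in \cref{reg-tot-provable-with-idem} the intended content is precisely that $+$ behaves as a binary join for $\leq$, which is what you prove. Your derivations $e+(e+f)=(e+e)+f=e+f$ and $(e+f)+g=e+(f+g)=e+g=g$ are the standard ones, and your bookkeeping of where idempotency enters matches the element-local ``moreover'' clause. One small point worth noting: your elimination step uses no idempotency at all, only associativity and the two hypotheses $e+g=g$, $f+g=g$; this is slightly sharper than the paper's own usage, which invokes ``idempotency of $e_Y$'' (resp.\ $e_S$) when applying the corollary, suggesting the authors had in mind a formulation where the upper bound $i$ is the designated idempotent. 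Your version only strengthens the statement and is compatible with everything downstream.
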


\begin{remark}
    [Ordered $\mu$-semirings]
    An \defname{ordered $\mu$-semiring} is a $\mu$-semiring equipped with a partial order satisfying the properties from \cref{eq:mon-po} above, as well as an ordered analogue of $\mu$-congruence, \defname{$\mu$-monotonicity}: $\vlinf{}{}{\mu x e(x) \leq \mu x f(x)}{e(x) \leq f(x)}$ (see \cite{EsikKuich12:mod-aut-theory,EsikLeiss02} for further details).
    
    Again, this does not necessarily hold in models of $\basicax+\idem$, even ones closed under (strong) $\mu$-congruence.
    For instance if $e(x) $ is $a+x$ and $f(x) $ is $a + b + x$, then $e(x)\leq f(x)$ is valid in all models of $\basicax+\idem$, but $\mu x e(x)$ needs only be greater than $a$ and $\mu x f(x) $ needs only be greater than $a+b$.
    Again tweaking the language model, it is consistent to interpret $\mu x e(x) $ as $\{a,b,c\}$ but $\mu x f(x)$ as $\{a,b\}$.
    Note that, since $e(x) \leq f(x) $ is valid in \emph{all} models of $\basicax + \idem + \mu$-congruence, it follows that $\mu$-monotonicity is not even admissible for this theory.
\end{remark}

In this work we shall only consider the `natural order' already defined, $e\leq f \df e+f = f$, before more generally structures without idempotency.
It is worth recalling at this point:
\begin{fact}
[Idempotent ideal]
\label{rem:idempotents-form-ideal}
The class of idempotent elements of a semiring always forms an ideal.
\end{fact}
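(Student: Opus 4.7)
The plan is to unpack what it means for the idempotents to form an ideal, then verify each closure condition directly from the semiring axioms. Recall that an ideal $I$ of a semiring must contain $0$, be closed under $+$, and be closed under left and right multiplication by arbitrary elements of the semiring. So I need to check four things about the set $\mathrm{Id}(S) := \{a \in S : a + a = a\}$.

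First, $0 \in \mathrm{Id}(S)$ is immediate from the semiring axiom $e + 0 = e$ (instantiated at $e = 0$). Closure under $+$ uses commutativity and associativity of addition: if $a + a = a$ and $b + b = b$, then
\[
(a+b) + (a+b) \;=\; (a+a) + (b+b) \;=\; a + b.
\]
For the absorption property, suppose $a$ is idempotent and $r \in S$ is arbitrary. Then by distributivity, $ra + ra = r(a+a) = ra$, so $ra$ is idempotent; symmetrically, $ar + ar = (a+a)r = ar$, so $ar$ is idempotent. These four observations together constitute the ideal property.

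There is no real obstacle here: the entire fact is a direct consequence of the distributive laws together with commutativity and associativity of $+$, all of which are built into the semiring axioms of $\basicax$. The only subtlety worth flagging is that the convention of what counts as an ``ideal'' in a semiring (as opposed to a ring) must be fixed, since there are no additive inverses; under the standard convention used above, the verification is essentially a one-line computation per clause.
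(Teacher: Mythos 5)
Your proof is correct: the paper states this as a \emph{Fact} with no proof at all, and your direct verification (that $0$ is idempotent, that idempotents are closed under $+$ via commutativity and associativity, and that they absorb left and right multiplication via distributivity) is exactly the routine argument the authors evidently had in mind. The remark about fixing the semiring convention for ``ideal'' (no additive inverses required) is a sensible clarification and does not affect correctness.
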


In the presence of an order it is of particular interest to examine structures where $\mu$ calculates a \emph{least} (pre)fixed point wrt.\ the order,
\[
\ind\ :\quad  
e(\mu x e(x)) \leq \mu x e(x)
\qquad \& \qquad
e(f)\leq f \implies \mu x e(x) \leq f
\]
yielding an induction principle.
Note that the axioms above are neither (in)equational, nor finitely many: there is an axiom for each choice of $f$ above.

\begin{definition}
        A \defname{Park $\mu$-semiring} is a $\mu$-semiring satisfying $\ind$. A \defname{Chomsky Algebra} is an idempotent Park $\mu$-semiring whose order is the natural order.\footnote{In fact the final clause here is redundant: the \emph{only} monotone partial order on an idempotent semiring is the natural order.}
\end{definition}

Chomsky Algebras were defined in \cite{GHK13,Leiss16} as \emph{algebraically complete} idempotent semirings, while Park $\mu$-semirings were defined axiomatically in \cite{EsikLeiss02,EsikLeiss05}. 
Both presentations are equivalent, when construing $\mu$-expressions as a naming convention for equational systems (see previous subsection), as was noticed already in \cite{GHK13}.
Note that the fixed point axiom \eqref{item:fixed-point-axioms} is already a consequence of $\ind$ (see \cite{EsikLeiss02,EsikLeiss05}), and so Chomsky Algebras are models of $\basicax+\idem$, and Park $\mu$-semirings are models of $\basicax$.


Again, we shall not recount the established metatheory of (idempotent) Park $\mu$-semirings or Chomsky Algebras: what is important for our results are that they are recursively enumerable extensions of $\basicax (+ \idem)$, that are still sound for $\Lang$.

\section{Effective inseparability of two problems on CFGs}
\label{sec:eff-insep}

Before presenting our main results on the undecidability of various theories of (idempotent) semirings with fixed points, we shall take a detour through the recursion theory of problems on CFGs.

\subsection{Two problems on CFGs}

For our arguments, we shall use {\em effective inseparability} of two algorithmic problems for context-free grammars (CFG). 
We shall consider only a special class of CFGs which we call \emph{productive} ones. Let $\Alphabet$ be the set of all terminal symbols and let $\Var$ be the set of all non-terminal ones.
As usual we shall assume among the non-terminals a designated \textbf{start} symbol $S \in \Var$.

\begin{definition}
[Productive CFGs]
A CFG is \textbf{productive} if, for every production rule $X \Rightarrow \alpha_1 \ldots \alpha_n$, some $\alpha_i$ is a terminal, i.e.\ some $\alpha_i \in \Alphabet$.
\end{definition}

A typical example of productive CFG is a Greibach grammar without $\varepsilon$-pro\-duc\-ti\-ons. In Greibach grammars, we always have $\alpha_1 \in \Alphabet$. Thus, by Greibach's normalisation theorem~\cite{Greibach1965}, one shows that any context-free language without the empty word is generated by a productive CFG. On the other hand, the empty word may never be generated by such a grammar.
In the sequel we shall restrict our attention to productive CFGs.

The first problem we are going to consider is the {\em totality} problem (i.e., the question whether a given productive CFG generates all non-empty words over $\Alphabet$). The second one is more specific. It could be called ``the grammar is total for obvious reasons,'' or {\em regularly total}. 
Formally:


\begin{definition}
[(Regular) totality]
\label{def-regtotal}
A productive CFG is \textbf{total}, if it generates (from $S$) all non-empty words over $\Alphabet$.


A productive CFG $G$ is \textbf{regularly total}, if it includes two designated non-terminals $Y,T \in \Var$ such that:
\begin{itemize}
    \item For each $a \in \Alphabet$, $G$ includes all productions of the form:
    \begin{align*}
& S \Rightarrow a Y T \\
& T \Rightarrow a \\
& T \Rightarrow a T 
\end{align*}
\item There is some $n>0$ such that:
\begin{enumerate}
    \item $S$ generates all nonempty words of length $\leq n+1$;
    \item $Y$ generates all words of length $n$.
\end{enumerate}
\end{itemize}

\end{definition}

It is not hard to see that
every regulary total productive CFG is total: 

\begin{proposition}
    [Regular totality implies totality]
    Every regularly total productive CFG is total.
\end{proposition}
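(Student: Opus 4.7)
The plan is by case analysis on the length of a target nonempty word $w \in \Alphabet^+$, using the parameter $n$ from \cref{def-regtotal}. First I would dispense with short words: if $1 \leq |w| \leq n+1$, then $S \Rightarrow^* w$ is immediate from condition~(1) in \cref{def-regtotal}.

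For the remaining case, $|w| \geq n+2$, I would decompose $w$ as $w = a \cdot u \cdot v$ where $a \in \Alphabet$ is the first letter, $u \in \Alphabet^n$ is the following block of length exactly $n$, and $v \in \Alphabet^+$ is the nonempty suffix (which has length $|w| - n - 1 \geq 1$). The production $S \Rightarrow aYT$ (available for this particular $a$) lets me reduce the task to showing $Y \Rightarrow^* u$ and $T \Rightarrow^* v$. The first of these is condition~(2) in \cref{def-regtotal}, since $|u| = n$.

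For the second, I would observe as a preliminary lemma that the productions $T \Rightarrow a$ and $T \Rightarrow a T$ (for every $a \in \Alphabet$) make $T$ derive every nonempty word over $\Alphabet$. This is a routine induction on $|v|$: the base case $|v|=1$ uses $T \Rightarrow a$, and the inductive step $v = a v'$ uses $T \Rightarrow a T \Rightarrow^* a v' = v$ by the induction hypothesis applied to $v'$. Combining, $S \Rightarrow a Y T \Rightarrow^* a u v = w$, which completes the argument.

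There is no real obstacle here; the proposition is essentially a direct unpacking of the definition, and the productive condition plays no role beyond justifying that $w$ is nonempty. The only small thing to be a bit careful about is ensuring the suffix $v$ is nonempty so that the $T$-derivation can terminate via $T \Rightarrow a$, which is exactly why the length threshold is $n+2$ rather than $n+1$.
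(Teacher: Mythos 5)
Your proof is correct and takes essentially the same route as the paper's: short words of length at most $n+1$ are handled by condition (1), and longer words are split as $a\vec b\vec c$ with $|\vec b|=n$ and $\vec c$ nonempty, using $S \Rightarrow aYT$ together with conditions on $Y$ and the fact that $T$ generates every nonempty word. Your explicit induction for the $T$-derivation merely spells out what the paper treats as trivial.
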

\begin{proof}
    Let $G$ be a regularly total productive CFG and let $\vec a$ be a nonempty word:
    \begin{itemize}
        \item If $|\vec a| \leq n+1$ then it is generated from $S$ by definition of regular totality.
        \item If $|\vec a | > n+1$ then write $\vec a = a\vec b \vec c$, for some $a \in \Alphabet$, $|\vec b| = n$ and $|\vec c| > 0 $. 
        Then $S \Rightarrow a YT$ and $Y$ generates $\vec b$, by assumption, and $T$ generates $\vec c$ (since it trivially generates \emph{every} nonempty word). \qedhere
    \end{itemize}
\end{proof}

The algorithmic complexity of regular totality is dual to that of totality: while the latter is known to be $\Pi^0_1$-complete, the former belongs to $\Sigma^0_1$ (and, as we shall see below, is also complete in this class).

In what follows, we tacitly identify productive CFGs with their G\"odel numbers under a reasonable encoding. 
We write $\regtot$ for the set of (G\"odel numbers of) productive CFGs which are regularly total and $\nottot$ for the set of (G\"odel numbers of) productive CFGs which are not total.

\subsection{Effective inseperability by reduction}

We recall, following~\cite{Kuznetsov2021TOCL}, the notion of effective inseparability and the results which allows it to be used for proving $\Sigma^0_1$-completeness of logical theories. 

Fix some acceptable enumeration $\{ \prf n\}_{n\in \N}$ of the one-input partial recursive functions on $\N$, 
and let us write $\res n$ for ``the $n$\textsuperscript{th} r.e.\ set,'' i.e.\ $\dom { \prf n}$, the domain of $ \prf n$.

\begin{definition}
$A,B\subseteq \N$ are \textbf{effectively inseparable} if:
\begin{itemize}
    \item $A \cap B = \emptyset$; and,
    \item there is a partial recursive function $h(x,y)$ s.t., whenever $\res u \supseteq A$ and $\res v \supseteq B$ with $\res u \cap \res v = \emptyset$, we have $h(u,v) \downarrow\,  \notin \res u \cup \res v$. 
\end{itemize}
We shall call such $h(x,y)$ a \textbf{productive function} for $A,B$.
\end{definition}

Given $A,B \subseteq \N$, a set $C\subseteq \N $ \textbf{separates} $A$ from $B$ (or is a \textbf{separator}) if $C\supseteq A$ but nonetheless $C\cap B = \emptyset$. 
In the Definition above, informally, $h$ uniformly falsifies any attempt to separate $A$ from $B$ by a decidable set.
Intuitively, we can see $\lambda v\,  h(u,v)$ here as a `proof' that any such $\res u$ is not decidable: it is a recursive function that computes counterexamples to the statement that $\res u$ has an r.e.\ complement.

The following result is a corollary of a theorem by Myhill~\cite{Myhill1955}, as shown in~\cite[Exercise 11-14]{Rogers1987}:
\begin{theorem}
\label{separator-of-eff-insep-sets-is-S01-complete}
If $A,B \subseteq \N $ are effectively inseparable r.e.\ sets and $C$ is an r.e.\ set separating them then $C$ is $\Sigma^0_1$-complete.
\end{theorem}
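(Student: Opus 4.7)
My plan is to apply the productive function witnessing effective inseparability of $A, B$ in a self-referential construction that directly reduces any r.e.\ set to $C$. Let $h(u, v)$ be the productive function for $A, B$, fix an r.e.\ index $c$ with $\res c = C$, and let $D$ be an arbitrary r.e.\ set. The goal is to construct a computable $g : \N \to \N$ satisfying $n \in D \iff g(n) \in C$.

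First I would use s-m-n to define a computable $(n, k) \mapsto \gamma(n, k)$ such that $\res{\gamma(n, k)} = B$ whenever $n \notin D$ and $\res{\gamma(n, k)} = B \cup \{h(c, k)\}$ whenever $n \in D$; this is straightforward since $B$ and $D$ are both r.e.\ Using the parametric recursion theorem, I would then obtain a computable $\beta : \N \to \N$ satisfying $\res{\beta(n)} = \res{\gamma(n, \beta(n))}$ for all $n$, so that the r.e.\ set $\res{\beta(n)}$ may mention its own index through the term $h(c, \beta(n))$. The candidate reduction is then $g(n) \df h(c, \beta(n))$.

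Verifying $n \in D \iff g(n) \in C$ splits into two cases. If $n \notin D$, then $\res{\beta(n)} = B$, which is disjoint from $\res c = C$, and effective inseparability applied to the pair $(c, \beta(n))$ yields $h(c, \beta(n)) \notin C \cup B$, hence $g(n) \notin C$. If instead $n \in D$, then $\res{\beta(n)} = B \cup \{g(n)\}$; were $\res{\beta(n)}$ disjoint from $C$, effective inseparability would give $g(n) \notin C \cup B \cup \{g(n)\}$, a contradiction, so $\res{\beta(n)}$ must meet $C$, and since $B \cap C = \emptyset$ the only possibility is $g(n) \in C$.

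The crux, and the main subtlety, is the self-reference in the definition of $\beta$: the enumeration of $\res{\beta(n)}$ needs to know its own index in order to insert $h(c, \beta(n))$ into itself exactly when $n \in D$. This is precisely what the parametric recursion theorem supplies, and it is the only nontrivial ingredient beyond s-m-n. The rest is a careful case analysis exploiting the defining property of the productive function $h$ on the pair $(c, \beta(n))$ in both directions, together with the basic inclusions $A \subseteq C$ and $B \cap C = \emptyset$ that come from $C$ being a separator.
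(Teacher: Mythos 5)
Your argument is correct, but it takes a different route from the paper: the paper gives no self-contained proof at all, instead obtaining the statement as a corollary of Myhill's theorem (every creative set is $m$-complete), via the standard observation that an r.e.\ separator $C$ of an effectively inseparable pair is creative --- given any $\res v \subseteq \overline C$, the set $\res v \cup B$ is an r.e.\ superset of $B$ disjoint from $\res c = C \supseteq A$, so $h(c,\cdot)$ productively extends $\res v$ inside $\overline C$. What you have done is inline that reduction \emph{and} the proof of Myhill's theorem into one self-referential construction: your $\gamma$ plays the role of the productive function for $\overline C$ precomposed with ``$\cup B$'', and the parametric recursion theorem supplies the diagonalisation that Myhill's theorem would otherwise package up. This buys a fully self-contained proof at the cost of re-proving a classical result; the paper's citation buys brevity. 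Your case analysis is sound, including the implicit handling of partiality: if $n \in D$ but $h(c,\beta(n))$ diverged, then $\res{\beta(n)}$ would equal $B$, disjoint from $C$, and productivity would force convergence --- a contradiction; so $g$ is indeed total, a point worth stating explicitly since $h$ is only partial recursive. Likewise the clause ``$\res{\gamma(n,k)} = B \cup \{h(c,k)\}$ whenever $n \in D$'' should be read as adding $h(c,k)$ only upon convergence, which is what the obvious dovetailed enumeration gives you and is all the argument needs. Finally, since $C$ is r.e.\ by hypothesis, the many-one reduction from an arbitrary r.e.\ $D$ does yield $\Sigma^0_1$-completeness as claimed.
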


Note in the Theorem above, and generally in the sequel, we shall not be precise about whether $C$ separates $A$ from $B$ or vice-versa, WLoG by symmetry of the statement.

Effective inseparability can be propagated by a (very weak) kind of reduction.
Given $A,B\subseteq \Nat$ with $A\cap B = \emptyset$ and $A', B'\subseteq \Nat$ with $A' \cap B' = \emptyset$, an \textbf{e.i.-reduction} from $(A',B')$ to $(A,B)$ is a total computable function $f$ such that: 
\begin{enumerate}
\item\label{item:f(A)-in-A'} $f(A) \subseteq A'$
\item\label{item:f(B)-in-B'} $f(B) \subseteq B'$
\end{enumerate}
Such maps are indeed appropriate for reducing questions of effective inseperability:

\begin{proposition}\label{prop-reduction}
Suppose $A,B \subseteq \N$ are effectively inseparable, and 
 $A',B' \subseteq \N$ with $A' \cap B' = \emptyset$.  
If $f$ is an e.i.-reduction from $(A',B')$ to $(A,B)$ 
then $A'$ and $B'$ are also effectively inseparable.
\end{proposition}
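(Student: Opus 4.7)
The plan is to transfer a productive function for $(A,B)$ to one for $(A',B')$ along the reduction $f$. Intuitively, if $\res{u'}$ and $\res{v'}$ attempt to separate $A'$ from $B'$, then their preimages under $f$ attempt to separate $A$ from $B$, and so effective inseparability of the latter pair produces a counterexample, which $f$ pushes forward to a counterexample against $\res{u'},\res{v'}$.

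First I would invoke the s-m-n theorem to obtain a total computable function $g$ such that $\res{g(n)} = \inv f(\res n)$ for every $n$; this uses only that $f$ is total computable. Then, given indices $u',v'$ with $\res{u'}\supseteq A'$, $\res{v'}\supseteq B'$ and $\res{u'}\cap\res{v'}=\emptyset$, I would set $u\df g(u')$ and $v\df g(v')$ and check the three hypotheses needed to apply the productive function $h$ of $(A,B)$:
\begin{itemize}
    \item $\res u \supseteq A$, since $f(A)\subseteq A'\subseteq \res{u'}$ by condition \eqref{item:f(A)-in-A'}, so $A\subseteq \inv f(\res{u'})=\res u$;
    \item $\res v \supseteq B$, symmetrically, using condition \eqref{item:f(B)-in-B'};
    \item $\res u \cap \res v = \emptyset$, since any $n$ in the intersection would give $f(n)\in \res{u'}\cap \res{v'}=\emptyset$.
\end{itemize}

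Having met these hypotheses, $h(u,v)\downarrow$ and $h(u,v)\notin \res u\cup\res v$, so $f(h(u,v))\notin \res{u'}\cup\res{v'}$ by the very definition of $\res u,\res v$ as preimages. Therefore the partial recursive function
\[
h'(x,y)\df f\bigl(h(g(x),g(y))\bigr)
\]
is a productive function for $(A',B')$. Disjointness $A'\cap B'=\emptyset$ is given by hypothesis, so $(A',B')$ is effectively inseparable.

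There is no real obstacle here; the only subtlety is the uniform computability step, i.e.\ producing $g$ with $\res{g(n)}=\inv f(\res n)$, which is a routine application of the s-m-n theorem since $f$ is total computable (if $f$ were only partial we would need to be more careful, but the definition of e.i.-reduction insists on totality).
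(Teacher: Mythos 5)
Your proof is correct and follows essentially the same route as the paper: obtain a total computable $g$ with $\res{g(n)}=\inv f(\res n)$, verify that $g$ maps separating attempts for $(A',B')$ to separating attempts for $(A,B)$, and define the productive function $h'(u,v)=f(h(g(u),g(v)))$. The verification steps, including the final push-forward through $f$, match the paper's argument exactly.
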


\begin{proof}
The preimage of an r.e.\ set under a total computable mapping is itself r.e.
Indeed we have $\inv f (\res n) = \dom {\prf n \circ f}$, and we can even effectively compute from $n$ the index of the r.e.\ function $\prf n\circ f $, say by a total recursive function $g$, so that always $\inv f(\res n) = \res {g(n)}$.

Now, let $h$ be a productive function for $A,B$ and let us define a productive function $h'$ for $A',B'$ as follows: \[h'(u,v) = f(h(g(u), g(v)))\] 

Clearly $h'$ is a partial recursive function.
To show it is productive for $A',B'$, suppose
  $\res u \supseteq A'$, $\res v \supseteq B'$ with
  $\res u \cap \res v = \emptyset$ and let us show that $h'(u,v) \notin \res u \cup \res v$. 
First we have:
\[
\begin{array}{rcll}
     \res{g(u)} & = & \inv f (\res u ) & \text{by definition of $g$} \\
        & \supseteq & \inv f (A') & \text{since $\res u \supseteq A'$} \\
        & \supseteq & A & \text{since $f(A) \subseteq A'$} \\
    \noalign{\medskip}
    \res{g(v)} & = & \inv f (\res v ) & \text{by definition of $g$} \\
        & \supseteq & \inv f (B') & \text{since $\res v \supseteq B'$} \\
        & \supseteq & B & \text{since $f(B) \subseteq B'$} \\
    \noalign{\medskip}
    \res{g(u)} \cap \res{g(v)} & = & \inv f (\res u) \cap \inv f (\res v) & \text{by definition of $g$} \\
        & = & \inv f (\res u \cap \res v ) & \text{since preimages preserve intersections}\\
        & = & \inv f (\emptyset) & \text{since $\res u \cap \res v = \emptyset$} \\
        & = & \emptyset
\end{array}
\]



From here we have:
\[
\begin{array}{rcll}
     h(g(u),g(v)) & \notin & \res{g(u)} \cup \res{g(v)} & \text{since $h$ is productive for $A,B$} \\
        & \notin & \inv f (\res u) \cup \inv f (\res v) & \text{by definition of $g$} \\
        & \notin & \inv f (\res u \cup \res v) & \text{since preimages preserve unions}
\end{array}
\]
%
Hence $h'(u,v) = f(h(g(u), g(v)) \notin  \res u \cup \res v$, as required.
\end{proof}

\subsection{Effective inseperability of $\regtot$ and $\nottot $}

The main result of this section is the following:

\begin{theorem}
\label{regtot-nottot-eff-insep}
The sets $\regtot$ and $\nottot$ are effectively inseparable.
\end{theorem}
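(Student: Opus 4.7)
The plan is to apply Proposition~\ref{prop-reduction} by reducing from the classical effectively inseparable pair $(\halts, \loops)$ of halting and non-halting Turing machines (see e.g.~\cite{Rogers1987}). Concretely, we construct a total computable function $f$ mapping (G\"odel numbers of) TMs to (G\"odel numbers of) productive CFGs such that $f(\halts) \subseteq \regtot$ and $f(\loops) \subseteq \nottot$; the effective inseparability of $(\regtot, \nottot)$ then follows immediately from Proposition~\ref{prop-reduction}.

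The grammar $f(M)$ will be built from two components: (a) the syntactic skeleton required by \cref{def-regtotal}, namely the productions $S \Rightarrow aYT$, $T \Rightarrow a$, $T \Rightarrow aT$ for each $a \in \Alphabet$; and (b) a \emph{simulation sub-grammar}, whose derivations from $Y$ (together with some auxiliary productions for $S$) encode $M$'s step-by-step execution. The crucial design feature is that each simulation step emits an arbitrary terminal from $\Alphabet$, and a complete derivation exists precisely when $M$ reaches a halting configuration. If $M$ halts in $t$ steps then $Y$ derives \emph{every} word of length $n := t+1$ (because each step freely chooses its emitted symbol), and parallel $S$-productions yield every word of length $\leq n+1$; if $M$ loops, no complete simulation derivation exists and both sources contribute nothing.

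The verification then splits cleanly: in the halting case, $S \Rightarrow aYT$ covers all words of length $\geq n+2$, while the simulation-dependent productions cover $\Alphabet^n$ and all shorter nonempty words, so $f(M) \in \regtot$ with witness $n = t+1$; in the looping case $Y$ derives no word and $S$ generates at most a bounded set of short words, in particular missing infinitely many nonempty words, so $f(M) \in \nottot$. The main technical obstacle is the simulation sub-grammar itself: since CFGs are strictly weaker than TMs as language recognisers, one cannot directly simulate a TM by CFG derivations. Standard workarounds --- for example, first reducing $M$ to a Turing-equivalent model with compact configurations (e.g., tag systems or Minsky machines) whose transitions can be rendered as CFG productions, or combining the classical CFL encoding of invalid computation histories with the regular-totality skeleton to invert the polarity of the usual reduction --- all suffice; analogous constructions have been used in previous reductions for action lattices, cf.~\cite{Kuznetsov2021TOCL}.
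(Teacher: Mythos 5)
Your high-level strategy---an e.i.-reduction from a halting/looping pair of Turing machines into $(\regtot,\nottot)$ via \cref{prop-reduction}---is the same as the paper's, but the construction you describe cannot exist, and the obstacle is not merely that ``one cannot directly simulate a TM by CFG derivations'': the \emph{specification} of your grammar is already recursion-theoretically impossible. You require that $Y$ derive some word when $M$ halts and \emph{no} word when it does not (``in the looping case $Y$ derives no word''). Whether a given non-terminal of a given CFG derives at least one terminal word is decidable (the standard marking algorithm behind CFG emptiness), so $M\mapsto f(M)$ would yield a \emph{decidable} separator of halting machines from looping ones, contradicting even their recursive inseparability. No choice of ``simulation sub-grammar'' (tag systems, Minsky machines, or otherwise) can repair this, since the emptiness test applies to the resulting CFG however it was built. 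A secondary problem is that you gloss $\loops$ as ``non-halting'': the complement of $\halts$ is not r.e., and since $\nottot$ \emph{is} r.e.\ and disjoint from $\regtot$, any total computable $f$ with $f(\halts)\subseteq\regtot$ and $f(\overline{\halts})\subseteq\nottot$ would force $f^{-1}(\nottot)=\overline{\halts}$ to be r.e.; under that reading the reduction is outright nonexistent. The paper instead works with machines having a designated looping state $q_c$, so that $\loops$ is an r.e.\ proper subset of non-halting and $(\loops,\halts)$ is a genuine effectively inseparable pair of r.e.\ sets.

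The paper also runs the reduction with the opposite polarity, and that is what makes it realisable: $G_{\TM,x}$ generates \emph{all} nonempty words \emph{except} the halting protocol (the classical context-free ``invalid computation histories'' construction), so halting yields non-totality, while looping yields totality; regular totality is then witnessed by choosing $n$ so that every length-$n$ word prefixed by $\#$ either is visibly not a protocol prefix or already contains $q_c$, whence $Y$ generates all of $\Alphabet^n$. Your closing remark about ``the classical CFL encoding of invalid computation histories \dots\ to invert the polarity'' gestures at exactly this, but it is a different construction from the one you actually describe, with the opposite assignment of halting/looping to $\regtot/\nottot$, and it is the only one of your suggested routes that can work.
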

This subsection is devoted to proving this result.
We proceed via a reduction from a more well-known pair of effectively inseparable problems: halting and looping for Turing machines. 
Throughout this section we shall work with one-input deterministic Turing machines that include, besides the initial and final state, also a designated {\em looping} state $q_c$ with rules that force the machine to remain at this state once it is reached: $(q_c, a) \Rightarrow (q_c, a, N)$ for any letter $a$ of the tape alphabet.

We say that a machine $\TM$ loops on an input $x$ if $\TM$ reaches $q_c$ when running on $x$. Looping is a very specific case of non-halting. Let 
\begin{align*}
& \loops = \{ (\TM,x) \mid \mbox{$\TM$ loops on input $x$} \}, \\
& \halts = \{ (\TM,x) \mid \mbox{$\TM$ halts on input $x$} \}
\end{align*}
Again, we construe both these sets as subsets of $\Nat$ by implicitly identifying pairs $(\TM,x)$ with their codes.
The following fact is folklore (see, e.g., \cite[Exercise 7-55d]{Rogers1987}\cite[Proposition 3]{Kuznetsov2021TOCL}):

\begin{proposition}\label{prop-insepCH}
The sets $\loops$ and $\halts$ are effectively inseparable.
\end{proposition}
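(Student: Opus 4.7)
The plan is to adapt the classical proof that the halting set and the complement of the halting set are effectively inseparable, leveraging the fact that the designated looping state $q_c$ makes $\loops$ genuinely r.e.\ (one can detect looping by witnessing entry to $q_c$), rather than co-r.e.\ as with general non-halting. The productive function will be built via the s-m-n theorem together with a fixed-point argument (Kleene's recursion theorem).

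Concretely, I would proceed as follows. Suppose $\res u \supseteq \loops$ and $\res v \supseteq \halts$ with $\res u \cap \res v = \emptyset$. Using s-m-n, define a total computable function that, given $u,v,e$, produces an index of a machine $M_{u,v,e}$ which on input $x$ dovetails the enumerations of $\res u$ and $\res v$ searching for the pair $(M_{u,v,e}, x)$ (identified with its code). If this pair is enumerated in $\res u$ first, $M_{u,v,e}$ immediately halts; if it is enumerated in $\res v$ first, $M_{u,v,e}$ enters the designated looping state $q_c$. By the recursion theorem, we extract from $(u,v)$ a single index $e(u,v)$, computable totally in $u,v$, such that the machine of index $e(u,v)$ behaves exactly as $M_{u,v,e(u,v)}$ above. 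Set $h(u,v)$ to be the code of $(e(u,v), 0)$.

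To verify $h$ is productive for $(\loops, \halts)$: argue by cases on whether $(e(u,v),0)$ lies in $\res u$ or in $\res v$. If $(e(u,v),0) \in \res u$, then by construction $M_{e(u,v)}$ halts on $0$, so $(e(u,v),0) \in \halts \subseteq \res v$, contradicting $\res u \cap \res v = \emptyset$. If $(e(u,v),0) \in \res v$, then $M_{e(u,v)}$ enters $q_c$ on $0$, so $(e(u,v),0) \in \loops \subseteq \res u$, again a contradiction. Hence $h(u,v) \notin \res u \cup \res v$, as required. Since $\loops \cap \halts = \emptyset$ is immediate (a looping machine never halts), this completes the argument.

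The only subtle point, which is really the crux rather than an obstacle, is arranging the self-reference so that the constructed machine can consult its own index when querying the enumerations of $\res u$ and $\res v$; this is exactly what the recursion theorem provides. Everything else is routine: the dovetailed search is partial recursive, s-m-n yields totality in the parameters $u,v$, and the case analysis is immediate from the disjointness of $\res u$ and $\res v$. Since this is standard folklore (and the statement is explicitly referenced in the literature cited), I would expect to present it concisely rather than redoing every bookkeeping detail.
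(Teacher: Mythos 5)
Your proof is correct. Note that the paper does not actually prove this proposition: it is stated as folklore with pointers to Rogers (Exercise 7-55d) and to \cite{Kuznetsov2021TOCL}, so there is no in-paper argument to compare against. Your recursion-theorem construction is exactly the standard argument underlying those references: diagonalise against a putative pair of disjoint r.e.\ supersets $\res u \supseteq \loops$, $\res v \supseteq \halts$ by building, via s-m-n and the recursion theorem, a machine that halts if its own code appears in $\res u$ and enters the absorbing state $q_c$ if it appears in $\res v$; both cases contradict disjointness, so the code lies outside $\res u \cup \res v$. The only hypotheses you use implicitly --- that $q_c$ is absorbing and distinct from the final state (giving $\loops \cap \halts = \emptyset$), and that the index produced is total computable in $(u,v)$ --- are exactly the ones the paper sets up, so the argument is complete as stated.
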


Now we can prove the effective inseparability of $\regtot$ and $\nottot$ by reduction, in the sense of Proposition~\ref{prop-reduction}, to the effective inseparability of $\loops$ and $\halts$.

Let us start by briefly recalling some ideas from the standard proof of $\Pi^0_1$-hardness for the totality problem for CFGs, as presented in Kozen's textbook~\cite{Kozen1997}. 
The {\em halting protocol} of a Turing machine $\TM$ on a given input $x$ (if it exists) is the word $\# \kappa_0 \# \kappa_1 \# \ldots \# \kappa_n \#$, where $\kappa_0$ is the initial configuration $q_0 x$, each $\kappa_{i+1}$ is the configuration which is the immediate successor of $\kappa_i$, and $\kappa_n$ is a final configuration. Let $\Alphabet$ be the alphabet in which configurations are written down (which includes the internal alphabet of $\TM$, its set of states, and the additional symbol $\#$).
We may readily define a (computable) {reducing function}, mapping each pair $(\TM,x)$ to a CFG which generates all non-empty words except for the halting protocol (if it exists). Thus, this grammar is total if and only if $\TM$ does not halt on $x$.

We shall modify this construction a bit, following~\cite{Kuznetsov2021TOCL}, in order to take care of looping. 
All non-empty words which are not the halting protocol must belong to one of the following three classes:
\begin{enumerate}[(I)]
\item\label{not-prefix-haltprot} words of length greater than 1, beginning with $\#$, which cannot be even a prefix of the halting protocol: this includes words not beginning with $\# \kappa_0 \#$; words where there is not a correct configuration between two $\#$'s; words with subwords $\# \kappa \# \kappa' \#$ where $\kappa'$ is not the immediate successor of $\kappa$; and, most importantly, all words including $q_c$;
\item\label{incomplete-prot} words of length greater than 1, beginning with $\#$, and not ending with $\# \kappa_F \#$ where $\kappa_F$ is a final configuration (such a word could be an incomplete protocol---a prefix of the halting protocol);
\item\label{begins-incorrectly} words beginning with a letter different from $\#$, and the one-letter word $\#$.
\end{enumerate}
Note that these classes are not necessarily disjoint: a word can belong to more than one of them.

Write (I)$'$ and (II)$'$ for the sets of words from (I) and (II), respectively, with the first symbol $\#$ removed.
The standard construction shows that both (I)$'$ and (II)$'$ are context-free. Denote the grammars generating (I)$'$ and (II)$'$ by $G_1$ and $G_2$, respectively; we suppose that both grammars are productive (e.g., being Greibach grammars). Here it is important that (I)$'$ and (II)$'$ are $\varepsilon$-free. Moreover, $G_1$ and $G_2$ are computable from $(\TM,x)$~\cite[Section~2.1]{Kuznetsov2021TOCL}. 

We assume $G_1$ and $G_2$ have disjoint sets of non-terminals, writing $Y$ and $Z$ for their start symbols, respectively. 
Now define the grammar $G_{\TM,x}$ obtained from the union of $G_1$ and $G_2$ along with the following additional productions:
\begin{align*}
& S \Rightarrow aYT && \mbox{ for each $a \in \Alphabet$} \\
& T \Rightarrow a T && \mbox{ for each $a \in \Alphabet$} \\
& T \Rightarrow a && \mbox{ for each $a \in \Alphabet$} \\
& S \Rightarrow \# Y \\
& S \Rightarrow \# Z \\
& S \Rightarrow a T && \mbox{ for each $a \in \Alphabet - \{ \# \}$} \\
& S \Rightarrow a && \mbox{ for each $a \in \Alphabet$}
\end{align*}
Here $S$  and $T$ are fresh non-terminals, and $S$ is the new start symbol of $G_{\TM,x}$.
Notice that the first three productions are exactly as in \cref{def-regtotal} for regular totality.

\begin{proposition}
\label{grammar-generates-all-but-haltprot}
    $G_{\TM,x}$ generates all nonempty words over $\Alphabet$, except the halting protocol of $\TM$ on $x$ (if it exists).
\end{proposition}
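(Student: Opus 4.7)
The plan is to prove the two inclusions separately: (a) $G_{\TM,x}$ does not derive the halting protocol $P$ (when it exists), and (b) every other nonempty word over $\Alphabet$ is derivable from $S$.

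For (a), I would enumerate the $S$-productions and observe that every derivation from $S$ begins by applying one of: $S \Rightarrow aYT$, $S \Rightarrow \#Y$, $S \Rightarrow \#Z$, $S \Rightarrow aT$ (with $a \neq \#$), or $S \Rightarrow a$. Since $P = \#\kappa_0\#\cdots\#\kappa_n\#$ begins with $\#$ and has length greater than $1$, only the rules $S \Rightarrow \#YT$, $S \Rightarrow \#Y$, and $S \Rightarrow \#Z$ are possible. I would rule each out: a derivation via $\#Y$ would require the tail of $P$ after the first $\#$ to lie in (I)$'$, contradicting the fact that the tail of $P$ is itself a prefix of $P$; a derivation via $\#Z$ would require the tail to lie in (II)$'$, contradicting that $P$ ends in $\#\kappa_F\#$; and a derivation via $\#YT$ would force some proper prefix $\#v_Y$ of $P$ with $v_Y \in $ (I)$'$, again contradictory.

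For (b), let $w$ be a nonempty word with $w \neq P$. I would case-split on the first letter and length. If $|w|=1$, use $S \Rightarrow w$. If $w$ starts with some $a \neq \#$, use $S \Rightarrow aT$ and expand $T$ to produce the rest (since $T$ clearly generates every nonempty word by a straightforward induction on length using $T \Rightarrow aT$ and $T \Rightarrow a$). If $w = \#w'$ with $|w'|\geq 1$, then $w$ is a word of length $\geq 2$ beginning with $\#$, and I would split on whether $w$ is in class (I) or (II): if so, apply $S \Rightarrow \#Y$ or $S \Rightarrow \#Z$ respectively and invoke the grammars $G_1, G_2$.

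The residual case — $w = \#w'$ but $w$ lies in neither (I) nor (II) — is the crux, and where the design of the halting protocol is essential. Being outside (I) means $w$ is a prefix of $P$; being outside (II) means $w$ ends in $\#\kappa_F\#$ for some final configuration $\kappa_F$. I would argue that since $P$ is the halting protocol, the only final configuration appearing in it is $\kappa_n$ at the very end (by the standard convention that the machine halts upon first reaching a final state), so any prefix of $P$ ending in $\#\kappa_F\#$ must be $P$ itself, contradicting $w \neq P$. If $P$ does not exist, class (I) vacuously contains all candidate words, so the residual case is empty. This last uniqueness argument about final configurations is where I expect the main subtlety to lie; everything else is routine grammar manipulation.
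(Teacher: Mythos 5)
Your proof is correct and follows essentially the same route as the paper: a case analysis on the first $S$-production, using that $G_1$ and $G_2$ generate exactly (I)$'$ and (II)$'$ and that $T$ generates every nonempty word. The only real difference is that your ``residual case'' explicitly verifies that every nonempty word other than the halting protocol falls into (I) $\cup$ (II) $\cup$ (III) (via the observation that the only final configuration in the protocol is the last one); the paper asserts this coverage as part of the standard construction in the setup preceding the proposition rather than inside its proof.
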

\begin{proof}
For words from (I) or (II) we start by firing $S \Rightarrow \# Y$ and $S \Rightarrow \# T$, respectively, and continue as in $G_1$ or $G_2$, respectively. 
Words from (III) are generated by the final two rules. Note that this analysis accounts for \emph{all} words generated by runs beginning with the final four rules from the start symbol $S$.

It remains to verify that the halting protocol cannot be generated by runs beginning with the first rule $S \Rightarrow a YT$.
However, any such run generates only words not beginning with $\#$ and words with a prefix from (I); by definition, no such word could be the halting protocol.    
\end{proof}

We can now define an appropriate \emph{reducing function}:
\begin{equation}
\label{eq:ei-reduction-rnt-to-ch}
    f \colon (\TM,x) \mapsto G_{\TM,x}
\end{equation}
Of course, this function is total and computable.
Moreover it allows us to deduce effective inseparability of $\regtot $ and $\nottot$ from that of $\loops$ and $\halts$:

\begin{proposition}
\label{regtot-nottot-reduces-to-loops-halts}
    The function $f$ in \eqref{eq:ei-reduction-rnt-to-ch} is an e.i.-reduction from $(\regtot, \nottot)$ to $(\loops,\halts)$.
\end{proposition}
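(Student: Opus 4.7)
The plan is to verify the two containments defining an e.i.-reduction: $f(\loops) \subseteq \regtot$ and $f(\halts) \subseteq \nottot$ (totality and computability of $f$ being immediate from its definition). The halting case is essentially direct from \cref{grammar-generates-all-but-haltprot}: if $\TM$ halts on $x$, the corresponding halting protocol $\pi$ is a well-defined nonempty word over $\Alphabet$ that is \emph{not} generated by $G_{\TM,x}$, whence $G_{\TM,x}\in\nottot$.

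For the looping case, suppose $\TM$ loops on $x$. Since no halting protocol exists, \cref{grammar-generates-all-but-haltprot} gives that $G_{\TM,x}$ is total; in particular $S$ generates every nonempty word, which immediately discharges condition~(1) of \cref{def-regtotal} (that $S$ generate all nonempty words of length $\leq n+1$) for any choice of $n$. The productions $S\Rightarrow aYT$, $T\Rightarrow a$, $T\Rightarrow aT$ required for regular totality are present by construction, so the remaining task is to exhibit some $n>0$ for which $Y$ generates every word of length exactly $n$.

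The key observation is that a looping $\TM$ eventually enters $q_c$ and remains there, so its (infinite) running protocol $\#\kappa_0\#\kappa_1\#\cdots$ has a step $N$ after which every configuration $\kappa_i$ contains $q_c$. I take $n$ to be at least the length of the finite prefix $\#\kappa_0\#\cdots\#\kappa_N\#$. Then for any word $\vec w$ with $|\vec w|\geq n$, either $\#\vec w$ agrees throughout with the running protocol---in which case $\vec w$ extends past step $N$ and so contains $q_c$---or $\#\vec w$ diverges from it at some point, exhibiting one of the syntactic violations listed (wrong start, invalid configuration, or a non-successor transition between $\#$s). Either way $\#\vec w$ lies in (I), hence $\vec w$ lies in (I)$'$, so $Y$ generates $\vec w$ (as $Y$ is the start symbol of $G_1$). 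In particular $Y$ generates every word of length exactly $n$, completing the argument.

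The main obstacle is this last step: converting the semantic property ``$\TM$ loops'' into a concrete syntactic threshold $n$ forcing \emph{every} sufficiently long word into class~(I)$'$. The trick is that $q_c$ serves as a local, syntactic witness of looping that the grammar $G_1$ can directly detect, which is precisely why the construction includes ``all words including $q_c$'' in the definition of (I).
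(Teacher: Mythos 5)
Your proof is correct and follows essentially the same route as the paper's: the halting case is read off from \cref{grammar-generates-all-but-haltprot}, and the looping case chooses $n$ from the length of an initial segment of the run protocol that reaches $q_c$, so that every word of length $n$ either deviates syntactically from the protocol or contains $q_c$, and hence lies in (I)$'$ and is generated by $Y$. The only (inessential) omission is the explicit remark that $\regtot \cap \nottot = \emptyset$, which is part of the e.i.-reduction definition but was already established earlier in the paper.
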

\begin{proof}
Clearly $\regtot \cap \nottot = \emptyset$. 
So we need to show the following two properties:
\begin{itemize}
\item if $(\TM,x) \in \mathbf{C}$, then $f((\TM,x)) = G_{\TM,x} \in \mathbf{R}$;
\item if $(\TM,x) \in \mathbf{H}$, then $f((\TM,x)) = G_{\TM,x} \in \nottot$.
\end{itemize}

The second property follows immediately from \cref{grammar-generates-all-but-haltprot}, since the halting protocol cannot be generated by $G_{\TM,x}$. The first property is more involved.

Let $\TM$ loop on $x$ and let $n+1$ be the length of an (incomplete) protocol of $\TM$ on $x$ which reaches a configuration where $\TM$ is in state $q_c$. We shall show that $G_{\TM,x}$ is regularly total, according to the given $n$.
That is, we will show that:
\begin{enumerate}
    \item $S$ generates all nonempty words of length $\leq n+1$.
    \item $Y$ generates all words of length $n$.
\end{enumerate}

For (1), since $\TM$ does not halt on $x$, there is no halting protocol, and $S$ generates all nonempty words, by \cref{grammar-generates-all-but-haltprot}. In particular, $S$ generates all words of length $\leq n + 1$.

For (2), we need to show that any word $\vec a$ of length $n$ belongs to (I)$'$.
For this, even if $\# \vec a$ is a prefix of the (infinite) protocol of $\TM$ on $x$, it includes, by choice of $n$, the letter $q_c$, and so $\# \vec a \notin $(I).
%
%
%
%
\end{proof}

Now we can finally conclude the main result of this section:
\begin{proof}
    [Proof of \cref{regtot-nottot-eff-insep}]
    Follows immediately from \cref{prop-reduction,prop-insepCH,regtot-nottot-reduces-to-loops-halts}.
\end{proof}


\section{Undecidability of (extensions of) $\basicax + \idem$ }
\label{sec:undec-with-idem}

This section is devoted to a proof of our first main result:

\begin{theorem}
\label{thm:undecidability-with-idem}
    Any r.e.\ extension of $\basicax + \idem$ that is sound for $\Lang$ is $\Sigma^0_1$-complete.
\end{theorem}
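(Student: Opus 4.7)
The strategy is to prove $\Sigma^0_1$-hardness by a many-one reduction from the effectively inseparable pair $(\loops, \halts)$ (\cref{prop-insepCH}), composed with the CFG-constructing reduction $f \colon (\TM, x) \mapsto G_{\TM,x}$ from \cref{regtot-nottot-reduces-to-loops-halts}; the $\Sigma^0_1$ upper bound is immediate since any r.e.\ extension has r.e.\ theory. Concretely, I will construct a computable map $g$ on CFGs of the form $G_{\TM, x}$, sending each to a $\mu$-expression inequation, such that $g(G_{\TM,x})$ is provable in $\basicax + \idem$ whenever $(\TM, x) \in \loops$, and $g(G_{\TM,x})$ is not valid in $\Lang$ (hence not provable in any extension sound for $\Lang$) whenever $(\TM, x) \in \halts$. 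Then $(g \circ f)^{-1}$ of the theory is an r.e.\ separator of $(\loops, \halts)$, hence $\Sigma^0_1$-complete by \cref{separator-of-eff-insep-sets-is-S01-complete}, so the theory itself is $\Sigma^0_1$-hard.

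The key feature I exploit is that in the grammar $G_{\TM,x}$ from \cref{regtot-nottot-reduces-to-loops-halts}, the nonterminal $T$ has precisely the productions $T \Rightarrow a$ and $T \Rightarrow aT$ for each $a \in \Alphabet$, and no others. I apply \cref{cfgs-have-solutions-in-basicaxs} using a Beki\'c elimination ordering that solves $T$ \emph{first}; since $T$'s right-hand sides involve no other nonterminal, this yields a closed expression $e_T = \mu t(\sigma + \sigma t)$, writing $\sigma := \sum_{a \in \Alphabet} a$. Let $e_S, e_Y$ be the resulting Beki\'c solutions for $S$ and $Y$, and define $g(G_{\TM,x}) := (e_T + e_S = e_S)$, i.e.\ $e_T \leq e_S$. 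Soundness is routine: in $\Lang$, $\lang{e_T} = \Alphabet^+$ while $\lang{e_S}$ is the CFL generated by $G_{\TM,x}$ from $S$, so $G_{\TM,x} \in \nottot$ yields $\lang{e_T} \not\subseteq \lang{e_S}$ and $g(G_{\TM,x})$ is not valid in $\Lang$.

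For the provability direction, suppose $(\TM, x) \in \loops$ so that $G_{\TM,x} \in \regtot$ with some totality witness $n$. A small \emph{derivations lemma}, proved by routine induction on CFG derivation trees using the Beki\'c identities $e_X = \sum_j f_{X,j}(\vec e)$, monotonicity of concatenation and sum, and idempotency, yields: if $X \Rightarrow^* w$ in $G_{\TM,x}$ then $\basicax + \idem \vdash w \leq e_X$. Instantiating at the regular totality conditions gives $\sigma^k \leq e_S$ for each $1 \leq k \leq n+1$, and $\sigma^n \leq e_Y$. The required productions $S \Rightarrow aYT$ directly yield $\sigma \cdot e_Y \cdot e_T \leq e_S$ (summing over $a$ via idempotency). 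Iterating the fixed-point identity $e_T = \sigma + \sigma e_T$, valid by the fixed-point axiom of $\basicax$, yields $e_T = \sigma + \sigma^2 + \cdots + \sigma^{n+1} + \sigma^{n+1} e_T$. Combining: $\sigma^{n+1} e_T = \sigma \cdot (\sigma^n \cdot e_T) \leq \sigma \cdot e_Y \cdot e_T \leq e_S$, while $\sigma + \cdots + \sigma^{n+1} \leq e_S$, so by idempotency $e_T \leq e_S$, as required.

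The main obstacle is this provability direction, since $\basicax + \idem$ has neither $\mu$-congruence nor any induction or leastness principle for $\mu$. The key maneuver circumventing both is the Beki\'c ordering choice making $e_T$ literally equal to $\mu t(\sigma + \sigma t)$: this replaces what would otherwise be an essential inequality $\mu t(\sigma + \sigma t) \leq e_T$ (which genuinely does require induction in general) with the trivial observation that the two expressions coincide, so that the argument proceeds using only iterated fixed-point unfolding together with the summand structure of Beki\'c solutions.
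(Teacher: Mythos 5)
Your proposal is correct and follows essentially the same route as the paper: the provability direction is exactly the paper's \cref{reg-tot-provable-with-idem} (built from the membership lemma \cref{lem:membership} and the unfolding lemma \cref{lem:unfolding-top}, with the solution for $T$ fixed to be $\top = \mu x\bigl(\sum_a a + \sum_a ax\bigr)$ precisely to avoid needing any leastness or $\mu$-congruence principle), and the recursion-theoretic wrapper is the same effective-inseparability argument, merely with the reduction from $(\loops,\halts)$ inlined rather than factored through the intermediate pair $(\regtot,\nottot)$ as in \cref{regtot-nottot-eff-insep}. The only cosmetic caveat is that under the $\mu$-binder $\mu t(\sigma + \sigma t)$ and $\mu t\bigl(\sum_a a + \sum_a at\bigr)$ are distinct expressions absent $\mu$-congruence, so you should take the literal Beki\'c solution; this does not affect your argument, which only ever unfolds the fixed point.
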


We shall actually strengthen this result in the next section to avoid the need for idempotency (and even identity), by a sort of reduction, but we present this weaker version to illustrate the main ideas in a simpler setting.

\subsection{Some intermediate lemmata}

To prove \cref{thm:undecidability-with-idem} we first establish a couple intermediate results reasoning about (productive) CFGs in $\basicax (+ \idem)$.
As we shall later consider settings without idempotency and identity, we shall be careful to avoid relying on the multiplicative identity $1$, and shall also be sensitive to the use of idempotency.

\begin{lemma}
\label{lem:membership}
    Fix a CFG $G $ and write $ e_X$ for (any) corresponding solutions in $\basicax$ of each non-terminal $X$ of $G$, cf.~\cref{cfgs-have-solutions-in-basicaxs}. 
    If $X$ generates a word $\vec a$ then $\basicax + \idem \proves \vec a \leq e_X$.
\end{lemma}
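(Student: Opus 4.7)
My plan is to proceed by induction on the structure of a derivation tree witnessing that $X$ generates $\vec a$ in $G$ (equivalently, on the number of production-steps in a leftmost derivation). The two algebraic facts I will rely on throughout are: (i) by \cref{cfgs-have-solutions-in-basicaxs}, $\basicax \proves e_X = \sum_j f_{ij}(\vec e)$ for every non-terminal $X$, hence under $\idem$ each individual summand satisfies $f_{ij}(\vec e) \leq e_X$ (since idempotency gives $a \leq a+b$); and (ii) by \cref{props-of-nat-order}, multiplication is monotone with respect to $\leq$ in $\basicax + \idem$.

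For the base case, consider a one-step derivation $X \Rightarrow \alpha_1 \cdots \alpha_n = \vec a$, which forces every $\alpha_k$ to be a terminal in $\Alphabet$. Then $\vec a$ is literally one of the summands $f_{ij}(\vec e)$ defining $e_X$ (substituting $\vec e$ has no effect, as there are no non-terminals in the right-hand side), so $\vec a \leq e_X$ follows directly from (i).

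For the inductive step, fix a derivation tree whose root applies a production $X \Rightarrow \alpha_1 \cdots \alpha_n$; each position $\alpha_k$ is either a terminal (and its yield is $\alpha_k$ itself) or a non-terminal $Y_k$ whose subtree generates some shorter $\vec a_k$, with $\vec a$ obtained by concatenating these yields. Applying the induction hypothesis to the non-terminal positions gives $\vec a_k \leq e_{Y_k}$ in $\basicax + \idem$, and repeated use of monotonicity of multiplication (fact (ii)) then yields $\vec a \leq f_{ij}(\vec e)$, where $f_{ij}$ is the right-hand side in question. Transitivity with fact (i) concludes $\vec a \leq e_X$.

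The argument is essentially routine; the only care needed is in writing out the concatenation cleanly when terminals and non-terminals are interleaved within the same production, and in ensuring each step is justified strictly within $\basicax + \idem$ — in particular, not tacitly appealing to $\mu$-congruence or to the leastness of fixed points, since neither is available in $\basicax$. I expect the main (minor) obstacle to be purely notational bookkeeping for these mixed right-hand sides, rather than any real mathematical difficulty.
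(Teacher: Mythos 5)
Your proposal is correct and follows essentially the same route as the paper's proof: induction on the derivation tree, with the base case (all-terminal production) handled by idempotency of the word plus \cref{cfgs-have-solutions-in-basicaxs}, and the inductive step by monotonicity of multiplication applied to the yields of the non-terminal positions, then transitivity. If anything you spell out slightly more explicitly than the paper the step $f_{ij}(\vec e)\leq e_X$ extracting a single summand from $e_X = \sum_j f_{ij}(\vec e)$, which is a harmless elaboration rather than a divergence.
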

\begin{proof}
    By induction on the derivation tree of $\vec a $ from $X$ in $G$.
    Working in $\basicax+\idem$:
    \begin{itemize}
        \item If $X \Rightarrow \vec a$, then we have $\vec a + g = e_X$ for some $g$ by \cref{cfgs-have-solutions-in-basicaxs}, and so $\vec a \leq e_X$ by idempotency of $\vec a$ and \cref{props-of-nat-order}.
        \item If $X \Rightarrow \vec a_0 X_1 \vec a_1 \cdots \vec a_{n-1}  X_n \vec a_n$, s.t.\ $\vec a = \vec a_0 \vec b_1 \vec a_1 \cdots \vec b_n \vec a_n$, then each $\vec b_i $ is generated by $X_i$ by smaller derivation trees.
        Thus by inductive hypothesis we have $\vec b_i \leq e_{X_i}$ for all $i$.
        Finally by monotonicity of products wrt $\leq$, we have indeed $\vec a \leq e_X$.\qedhere
    \end{itemize}
\end{proof}

Let us write $\top \df \mu x \left(\sum\limits_{a \in \Alphabet} a + \sum\limits_{a\in \Alphabet} ax \right)$, which trivially computes the `total' language of all nonempty words.

\begin{lemma}
\label{lem:unfolding-top}
    For $n>0$, we have $\basicax \proves \top = \sum\limits_{|\vec a|=1}^{n}\vec a + \sum\limits_{|\vec a|= n} \vec a \top$.
\end{lemma}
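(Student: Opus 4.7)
The plan is to prove the statement by induction on $n \geq 1$, using only the fixed point axiom \eqref{item:fixed-point-axioms} together with the semiring axioms (associativity and distributivity). Crucially, we do not need to reason underneath the $\mu$ binder: the only fact we invoke about $\top$ is the fixed point unfolding it supplies.

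For the \textbf{base case} $n=1$, I would simply observe that axiom \eqref{item:fixed-point-axioms} applied to the defining operator of $\top$ gives
\[
\top \ = \ \sum_{a \in \Alphabet} a \ + \ \sum_{a \in \Alphabet} a \top \ = \ \sum_{|\vec a|=1}^{1} \vec a \ + \ \sum_{|\vec a|=1} \vec a \top,
\]
which is exactly the required identity.

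For the \textbf{inductive step}, assume the equation holds for some $n \geq 1$. Starting from
\[
\top \ = \ \sum_{|\vec a|=1}^{n} \vec a \ + \ \sum_{|\vec a|=n} \vec a \top,
\]
I would substitute the base-case unfolding $\top = \sum_b b + \sum_b b\top$ into the single $\top$ appearing on the right-hand side. Applying left distributivity (twice) and associativity of $\cdot$ then rewrites the tail as
\[
\sum_{|\vec a|=n} \vec a \sum_{b \in \Alphabet} b \ + \ \sum_{|\vec a|=n} \vec a \sum_{b\in \Alphabet} b\top \ = \ \sum_{|\vec c|=n+1} \vec c \ + \ \sum_{|\vec c|=n+1} \vec c \top,
\]
where both re-indexings are justified purely by repeated distributivity and associativity (each word $\vec c$ of length $n+1$ arises uniquely as $\vec a b$ with $|\vec a|=n$ and $b \in \Alphabet$). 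Combining the new length-$(n{+}1)$ terms with the existing prefix sum then yields $\sum_{|\vec a|=1}^{n+1} \vec a + \sum_{|\vec a|=n+1} \vec a \top$, completing the induction.

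The only subtlety worth flagging is one we have already built into the argument: we must stay outside the $\mu$ binder, since $\basicax$ lacks $\mu$-congruence (as emphasized in \cref{rem:mu-congruence}). Fortunately the proof never manipulates the body of $\top$; it treats $\top$ purely as a fixed point of its defining operator, obtained once-and-for-all from axiom \eqref{item:fixed-point-axioms}. The remaining manipulations are a routine semiring calculation, so no further obstacle is expected.
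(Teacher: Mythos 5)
Your proof is correct and follows essentially the same route as the paper's: base case by a single fixed-point unfolding of $\top$, inductive step by unfolding the trailing $\top$ and re-indexing via distributivity and associativity, then merging the sums. Your remark about staying outside the $\mu$ binder is a nice explicit acknowledgement of a constraint the paper's proof also respects implicitly.
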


\begin{proof}
    By induction on $n>0$:
    \begin{itemize}
        \item When $n=1$, then the statement follows by just an unfolding of the fixed point $\top$, axiom \eqref{item:fixed-point-axioms}.
        \item For the inductive step we have:
        \[
        \begin{array}{r@{\ = \ }ll}
             \top & \sum\limits_{|\vec a|=1}^{n}\vec a + \sum\limits_{|\vec a|= n} \vec a \top & \text{by inductive hypothesis} \\
                & \sum\limits_{|\vec a|=1}^{n}\vec a + \sum\limits_{|\vec a|= n} \vec a \left(\sum\limits_{a \in \Alphabet} a + \sum\limits_{a\in \Alphabet} a\top \right) & \text{by unfolding $\top$, axiom \eqref{item:fixed-point-axioms}} \\
                & \sum\limits_{|\vec a|=1}^{n}\vec a + \sum\limits_{|\vec a|= n} \vec a \sum\limits_{a \in \Alphabet} a + \sum\limits_{|\vec a|= n} \vec a\sum\limits_{a\in \Alphabet} a\top & \text{by distributivity} \\
                & \sum\limits_{|\vec a|=1}^{n}\vec a + \sum\limits_{|\vec a|= n+1} \vec a + \sum\limits_{|\vec a|= n+1} \vec a\top & \text{by distributivity} \\
                & \sum\limits_{|\vec a|=1}^{n+1}\vec a + \sum\limits_{|\vec a|= n+1} \vec a\top & \text{by rebracketing sums} \qedhere
        \end{array}
        \]
    \end{itemize}
\end{proof}

\subsection{Small proofs of regular totality}

Note that $\top =\mu x \left(\sum\limits_{a \in \Alphabet} a + \sum\limits_{a\in \Alphabet} ax \right)$ is a solution of the non-terminal $T$ in any regularly total grammar.
In what follows, whenever we fix solutions of a regularly total grammar, we assume the solution for $T$ is $\top$.
Again, we shall be sensitive to the use of idempotency, making clear any such instances for our strengthenings in the next section.

\begin{proposition}
\label{reg-tot-provable-with-idem}
    Fix a productive CFG $G \in \mathbf R$ and let $e_X$ be solutions of each if its non-terminals $X$ in $\basicax$, cf.~\cref{cfgs-have-solutions-in-basicaxs}, with $e_T = \top$. 
    Then $\basicax + \idem \proves \top \leq e_S$.
\end{proposition}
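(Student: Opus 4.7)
The plan is to unfold $\top$ to depth $n+1$, where $n$ is the witness integer from the definition of regular totality, and then bound each resulting summand by $e_S$ using the specific productions guaranteed by regular totality together with the membership bounds on $S$ and $Y$.

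First I would invoke \cref{lem:unfolding-top} at parameter $n+1$ to obtain
\[
\top \;=\; \sum_{1 \leq |\vec a| \leq n+1} \vec a \;+\; \sum_{|\vec a| = n+1} \vec a \cdot \top.
\]
The head sum is bounded by $e_S$ directly via \cref{lem:membership}, since regular totality guarantees that $S$ generates every nonempty word of length $\leq n+1$; idempotency together with monotonicity of $+$ (\cref{props-of-nat-order}) lets us collapse these finitely many inequalities into $\sum_{1 \leq |\vec a| \leq n+1} \vec a \leq e_S$.

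For the tail, I would factor each word of length $n+1$ as $\vec a = a\vec b$ with $a \in \Alphabet$ and $|\vec b| = n$, and use distributivity to rewrite
\[
\sum_{|\vec a| = n+1} \vec a \cdot \top \;=\; \sum_{a \in \Alphabet} a \cdot \Bigl(\sum_{|\vec b| = n} \vec b\Bigr) \cdot \top.
\]
Since $Y$ generates every word of length $n$ by regular totality, \cref{lem:membership} gives $\sum_{|\vec b|=n} \vec b \leq e_Y$. Monotonicity of product (\cref{props-of-nat-order}), combined with the hypothesis $e_T = \top$, then bounds the tail by $\sum_{a \in \Alphabet} a \cdot e_Y \cdot e_T$. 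Because each production $S \Rightarrow aYT$ appears in $G$ by regular totality, \cref{cfgs-have-solutions-in-basicaxs} exhibits $a \cdot e_Y \cdot e_T$ as a summand of $e_S$, so each such term is $\leq e_S$. Collecting both bounds and collapsing $e_S + e_S$ via idempotency yields $\top \leq e_S$, as desired.

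There is no deep obstacle here: everything is routine monotonicity. The only design choice is to set the unfolding depth to exactly $n+1$ so that the head sum is covered by the words $S$ enumerates directly, the middle $n$-letter factor is absorbed by $e_Y$, and the remaining tail is absorbed by $e_T = \top$. I do note, however, that idempotency is used essentially twice --- implicitly through \cref{lem:membership} and explicitly to collapse $e_S + e_S = e_S$ at the end --- which foreshadows the main point of friction in the next section, where idempotency (and identity) will need to be simulated via fixed points.
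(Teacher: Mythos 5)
Your proof is correct and follows essentially the same route as the paper's: unfold $\top$ to depth $n+1$ via \cref{lem:unfolding-top}, bound the head sum using \cref{lem:membership} for $S$, and bound the tail via the productions $S \Rightarrow aYT$ together with \cref{lem:membership} for $Y$ and $e_T = \top$. The only (cosmetic) imprecision is that passing from the individual bounds $\vec b \leq e_Y$ to $\sum_{|\vec b|=n} \vec b \leq e_Y$ also needs idempotency of $e_Y$ (not just \cref{lem:membership}), a use the paper tracks explicitly since it matters for the identity-free strengthening in the next section.
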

\begin{proof}
Since $G\in \mathbf R$, let $n>0$ be such that:
\begin{enumerate}
    \item\label{item:S-gens-all-short-words} $S$ generates all nonempty words of length $\leq  n+1$.
    \item\label{item:Y-gens-bar} $Y$ generates all words of length $n$.
    \item\label{item:G-reaches-loop} $G$ has a production $S \to aYT$ for each $a \in \Alphabet$.
\end{enumerate}
    Let us work in $\basicax + \idem$. 
    By \eqref{item:Y-gens-bar} above and \cref{lem:membership} we have
    $ \vec a \leq e_Y$ whenever $|\vec a| = n$.
    By \purple{idempotency of $e_Y$} and \cref{intro-elim-props-for-+} we have $\sum\limits_{|\vec a| = n}\vec a \leq e_Y $.
    Thus by monotonicity of $\cdot$ and distributivity:
    $$\sum\limits_{|\vec a| = n}\vec a \top \leq e_Y\top$$
    Now by \eqref{item:G-reaches-loop} and \cref{cfgs-have-solutions-in-basicaxs} we have $g + \sum\limits_{a \in \Alphabet}ae_Y\top = e_S$ for some $g$ (and since $e_T$ is $\top$).
    So by \purple{idempotency of $\sum\limits_{a \in \Alphabet}ae_Y\top$} we have:
    $$\sum\limits_{a \in \Alphabet} ae_Y\top \leq e_S$$
    Putting the two displays above together we have $\sum\limits_{a \in \Alphabet} a\sum\limits_{|\vec a| = n}\vec a \top \leq e_S$ by monotonicity, and so by distributivity:
    \begin{equation}
    \label{eq:long-words-reach-S}
        \sum\limits_{|\vec a| = n+1}\vec a \top \leq e_S
    \end{equation}
    
    Now by \eqref{item:S-gens-all-short-words} and again by \cref{lem:membership} we have $\vec a \leq e_S$ whenever $1 \leq |\vec a|\leq n+1$. 
    Again by \purple{idempotency of $e_S$} and monotonicity of $+$ we have 
    $\sum\limits_{|\vec a|=1}^{n+1} \vec a \leq e_S$.
    Putting this together with \eqref{eq:long-words-reach-S} above, we have \purple{by idempotency of $e_S$} and \cref{intro-elim-props-for-+}:
    $$\sum\limits_{|\vec a|=1}^{n+1} \vec a  + \sum\limits_{|\vec a| = n+1}\vec a \top \leq e_S$$
    From here we conclude by \cref{lem:unfolding-top} and transitivity of $\leq$, \cref{props-of-nat-order}.
\end{proof}

\subsection{Putting it together}
We are now ready to prove the main result of this section:

\begin{proof}
    [Proof of \cref{thm:undecidability-with-idem}]
    Let us assume that all CFGs have start symbol $S$ and write $e^G_X$ for the solution of the nonterminal $X$ in $G$ obtained by \cref{cfgs-have-solutions-in-basicaxs}.
    Let $\mathsf U \supseteq \basicax+\idem$ be r.e.\ with $\Lang \models \mathsf U$.  
    Writing $\total {\mathsf U} \df \{G \text{ productive} : \mathsf U \proves \top \leq e^G_S \}$,
    we have:
    \begin{itemize}
        \item $\total {\mathsf U}  \cap \nottot = \emptyset$, by assumption that $\Lang \models \mathsf U$.
        \item $\mathbf R \subseteq \total {\mathsf U}  $, since $\basicax+\idem \subseteq \mathsf U$ and by \cref{reg-tot-provable-with-idem}.
    \end{itemize}
    Thus by \cref{separator-of-eff-insep-sets-is-S01-complete,regtot-nottot-eff-insep} we have that $\total {\mathsf U}$ is $\Sigma^0_1$-complete.
    Since there is an obvious computable reduction from $\total {\mathsf U}$ to $\mathsf U$, we have that $\mathsf U$ is $\Sigma^0_1$-complete too, as required.
\end{proof}

Since all theories we have introduced are r.e.\ and modelled by $\Lang$, we thus immediately have:
\begin{corollary}
    The equational theories of idempotent Conway $\mu$-semirings, idempotent Park $\mu$-semirings and Chomsky algebras are undecidable, and moreover $\Sigma^0_1$-complete.
\end{corollary}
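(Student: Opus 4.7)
The plan is to apply \cref{thm:undecidability-with-idem} directly, instantiating its hypothesis theory $\mathsf U$ in turn by each of the three named classes. For this it suffices to check, in each case, that the equational theory is (a) r.e., (b) extends $\basicax + \idem$, and (c) sound for the language model $\Lang$ from \cref{language-model}.

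For (a) and (b), each of the three classes was introduced in \cref{sec:prelims} by adding finitely many equational axioms, a recursive axiom schema (for $\ind$), or an inference rule schema (for strong $\mu$-congruence) on top of $\basicax$, together with $\idem$ in the idempotent variants and in Chomsky algebras. Hence each equational theory is r.e. The one subtlety requiring care is that $\basicax$ deliberately omits $\mu$-congruence whereas all three target classes include it; since we only need an \emph{extension} of $\basicax + \idem$, this is immaterial. Moreover, as already noted in \cref{sec:prelims}, the fixed point axiom \eqref{item:fixed-point-axioms} of $\basicax$ is a consequence of both the Conway partial unfolding identity and of the induction schema $\ind$, so there is no issue of missing axioms either.

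For (c), the language model $\Lang$ was defined in \cref{language-model} to interpret $\mu$ as the Knaster--Tarski least (pre)fixed point over the complete lattice $(\Pow(\Alphabet^*), \subseteq)$. This interpretation immediately validates the induction principle $\ind$, the Conway identities (both standard consequences of the least-fixed-point calculus), and strong $\mu$-congruence (since extensionally equal monotone operators on a complete lattice have the same least fixed point). Idempotency is trivial, $\cup$ being idempotent on $\Pow(\Alphabet^*)$. Thus $\Lang$ is a model of all three theories.

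With (a)--(c) in hand, \cref{thm:undecidability-with-idem} applies in each case and delivers $\Sigma^0_1$-completeness (and hence undecidability) of the respective equational theory. There is essentially no mathematical obstacle beyond careful bookkeeping of the axioms of each class; the heavy lifting has already been carried out in the preceding sections, particularly in \cref{sec:eff-insep,sec:undec-with-idem}.
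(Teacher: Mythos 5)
Your proposal is correct and follows exactly the paper's route: the paper derives this corollary in one line from \cref{thm:undecidability-with-idem}, observing that each of the three theories is an r.e.\ extension of $\basicax+\idem$ modelled by $\Lang$. Your write-up merely spells out the bookkeeping (r.e.-ness, containment of $\basicax+\idem$ via the derivability of the fixed-point axiom from the Conway identities and from $\ind$, and soundness for $\Lang$) that the paper leaves implicit.
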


\section{The case without idempotency}
\label{sec:undec-without-idem}

In this section we shall strengthen the main result of the previous section to cover theories of semirings that are not necessarily idempotent.



    



\subsection{Identity-free $\mu$-semirings and productive expressions}

Write $\basicaxidfree$ for the axiomatisation defined just like $\basicax$ but without any of the axioms mentioning $1$ (and without $1$ in the language).

The \defname{productive} expressions (in the language of $\basicaxidfree$), written $p,q$ etc., are generated by,
\begin{equation}
    \label{eq:productive-expressions}
    p,q,\dots \quad ::= \quad 0 \quad \mid \quad a 
\quad \mid \quad p+q 
\quad \mid \quad pe \quad \mid \quad ep
\quad \mid \quad \mu X p
\end{equation}
where $e$ ranges over (possibly open, not necessarily productive) $1$-free expressions.

By inspection of the proof of \cref{eq-systems-have-solns-in-basicax}, we can give a more refined result than \cref{cfgs-have-solutions-in-basicaxs}:

\begin{proposition}
\label{prod-cfg-has-prod-solns}
    A productive CFG has productive solutions in $\basicaxidfree$.
    I.e., given a productive CFG $\{X_i \Rightarrow f_{ij}(\vec X)\}_{i<n,j}$, there are productive expressions $\vec e = e_0, \dots, e_{n-1}$ s.t.\ $\basicaxidfree \proves e_i = \sum_j f_{ij}(\vec e)$ for all $i<n$.
\end{proposition}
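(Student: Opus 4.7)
The plan is to refine the Bekić-style induction from \cref{eq-systems-have-solns-in-basicax} while tracking productivity throughout. First, I would note that any productive CFG determines a productive equational system in the sense that each right-hand side, of the form $\alpha_1 \cdots \alpha_k$ with some $\alpha_l \in \Alphabet$, can be parsed as a productive expression by bracketing around the terminal, namely as $(\alpha_1 \cdots \alpha_{l-1}) \cdot (\alpha_l \cdot (\alpha_{l+1} \cdots \alpha_k))$: the inner product is of form $pe$ (with $\alpha_l$ productive as a base terminal), and the outer product is then of form $ep$.

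The key technical ingredient would be a \textbf{substitution lemma}: if $p$ is productive and $e$ is any expression, then $p[e/X]$ is again productive. I would prove this by induction on the productive grammar~\eqref{eq:productive-expressions}: the cases $0$ and $a$ are immediate; the cases $p_1 + p_2$, $p_1 e'$, $e' p_1$, and $\mu Y p_1$ (with $Y$ $\alpha$-renamed fresh, WLoG) all follow by applying the induction hypothesis to the productive subterms and re-applying the corresponding productive-expression clause on the outside, noting that substitution into a generic expression slot $e'$ yields another generic expression.

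With this in hand, I would rerun the inductive argument of \cref{eq-systems-have-solns-in-basicax} while maintaining productivity. Given a productive equational system $\{X_i = \sum_j f_{ij}(\vec X, X_n)\}_{i \leq n}$, define $e_n'(\vec X) := \mu X_n \sum_j f_{nj}(\vec X, X_n)$, which is productive by the $p+q$ and $\mu X p$ clauses applied to the productive $f_{nj}$. The substitution lemma then shows that the reduced system $\{X_i = \sum_j f_{ij}(\vec X, e_n'(\vec X))\}_{i < n}$ is again productive, so the induction hypothesis furnishes productive solutions $\vec e$; setting $e_n := e_n'(\vec e)$ (productive by one more use of the substitution lemma), the equational chain from \cref{eq-systems-have-solns-in-basicax} verifies $\basicaxidfree \proves e_n = \sum_j f_{nj}(\vec e, e_n)$ using only the fixed-point axiom, which does not mention $1$. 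The main obstacle is really just the substitution lemma: although routine, it is what allows productivity to propagate both through the Bekić reduction (substituting $e_n'(\vec X)$ into the remaining equations) and through the final back-substitution (substituting $\vec e$ into $e_n'$); everything else is a careful bookkeeping of productivity on top of the existing proof.
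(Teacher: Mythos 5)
Your proposal is correct and follows essentially the same route as the paper, which simply observes that the right-hand sides of productive productions are productive expressions and that the proof of \cref{eq-systems-have-solns-in-basicax} then goes through unchanged without ever invoking $1$. Your explicit substitution lemma (productive expressions are closed under substituting arbitrary $1$-free expressions for variables) is exactly the bookkeeping the paper leaves implicit in ``by inspection of the proof,'' and your bracketing of each production around a terminal letter is the same observation the paper makes in passing.
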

In particular note that the RHS of a production in a productive CFG is a productive expression, whence the same proof of \cref{eq-systems-have-solns-in-basicax} applies for the statement above.
Note also that that proof does not rely on any properties of the multiplicative identity $1$ (which in any case plays no role in productive CFGs).

\subsection{An idempotent interpretation of productive expressions}

Temporarily write $e':= \mu x (e + \mu y (x + y))$. 
It is not hard to see that, over $\basicaxidfree$, $e'$ is an idempotent:
\[
\begin{array}{r@{\ = \ }ll}
      e'  & e + \mu y (e' + y) & \text{by unfolding $e'$, by axiom \eqref{item:fixed-point-axioms}} \\
        & e + e' + \mu y (e' + y) & \text{by unfolding $\mu y (e' + y)$, by axiom \eqref{item:fixed-point-axioms}} \\
        & (e + \mu y (e' + y) ) + e' & \text{by associativity and commutativity of $+$}\\
        & e' + e' & \text{by the first line}
\end{array}
\]

In the presence of an order $e'$ is also an upper bound of $e$, and in the presence of induction $e'$ is in fact the \emph{least} idempotent upper bound of $e$.
In particular in $\Lang $ (and any other idempotent model) we have simply $e'=e$:
\begin{itemize}
    \item On one hand, we have $\lang {e'} = \lang{e + \mu y (e'+y)}$, and so $\lang {e'} \supseteq \lang e$.
    \item On the other hand, we have that $e$ solves the matrix of $e'$:
    \begin{itemize}
        \item Clearly $\lang {\mu y (e+y)} = e$: as above we have $\lang {\mu y (e+y)} = \lang{e + \mu y (e+y)} \supseteq \lang e$; we also have that $e$ solves the matrix of $\mu y (e+y)$ in $\Lang$, i.e., $\lang e = \lang {e+e}$, thanks to idempotency.
        \item Thus we have $e = e + e = e + \mu y (e+y)$ in $\Lang$, again by idempotency.
    \end{itemize}
    Thus indeed $\lang {e'}\subseteq \lang e$, as $e'$ must be a least solution.
\end{itemize}

\begin{definition}
    [Idempotent interpretation]
    For $1$-free expressions $e$, define $\id e $ as follows:
\begin{itemize}
    \item $\id a \df a' = \mu x (a + \mu y (x+y))$
    \item $\id \cdot $ commutes with everything else. I.e.,
    \[
    \begin{array}{r@{\ \df\ }l}
         \id 0 & 0 \\
         \id x & x \\
         \id {(e+f)} & \id e + \id f \\
         \id {(ef)} & \id e \id f \\
         \id{(\mu x e)} & \mu x \id e
    \end{array}
    \]
\end{itemize}
\end{definition}

As previously argued it is not hard to see that:
\begin{proposition}
\label{idem-trans-equiv-in-lang}
    $\lang {\id e} = \lang {e}$.
\end{proposition}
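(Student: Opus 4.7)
The plan is to prove the claim by structural induction on the expression $e$. Since the language interpretation $\lang\cdot$ is really a map parametric in the valuation of free variables (construing each variable as ranging over $\pow{\Alphabet^*}$), I would first reformulate the statement in its parametric form: for every valuation $\rho\colon \Var \to \pow{\Alphabet^*}$, the languages $\sem e_\rho$ and $\sem{\id e}_\rho$ coincide. This stronger form is what the $\mu$ case requires, and for closed $e$ it reduces to the stated equality.

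The base cases for $0$ and $x$ are immediate from the definition of $\id \cdot$. The inductive cases for $+$, $\cdot$ and $\mu$ follow from the fact that $\id \cdot$ commutes with these connectives: for $+$ and $\cdot$ this is trivial by the clause $\lang{e \star f} = \lang e \star \lang f$, and for $\mu$ it reduces to showing that the two monotone operators $A \mapsto \sem f_{\rho[x\mapsto A]}$ and $A \mapsto \sem{\id f}_{\rho[x \mapsto A]}$ are the same, so they have the same least fixed point; this equality is exactly the inductive hypothesis in its parametric form.

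The substantive case is the base case $e=a$, where $\id a = \mu x\left(a + \mu y (x+y)\right)$. This is essentially the computation sketched just before the definition of $\id \cdot$, which I would simply record as a lemma: in $\Lang$, since the model is idempotent, for every language $L$ we have $\lang{\mu y (L+y)} = L$ (the inner fixed point is already solved by $L$ itself because $L + L = L$ in $\pow{\Alphabet^*}$, and it is the least such solution), and hence the outer fixed point reduces to $\mu x (a + x)$, which again solves to $\lang a$ by idempotency. So $\lang{a'} = \lang a$.

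The only potential subtlety — and thus the main place to be careful — is the $\mu$ step, because $\basicax$ does not carry $\mu$-congruence and we have warned (in \cref{rem:mu-congruence}) against reasoning under $\mu$ syntactically. The point to stress is that here we are working \emph{semantically} in the fixed model $\Lang$, where $\mu$ \emph{does} denote the genuine least fixed point operator and is therefore extensional on monotone maps; so the step is legitimate. Once this is clarified, the induction goes through without any further calculation.
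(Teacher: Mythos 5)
Your proof is correct and follows essentially the same route as the paper's: both strengthen the statement to a parametric form over all valuations of the free variables, dispose of the case $e=a$ via the computation given just before the definition of $\id\cdot$, and justify the $\mu$-case by the semantic (strong) $\mu$-congruence valid in $\Lang$, which is exactly the point your final paragraph clarifies. Nothing further is needed.
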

\begin{proof}
[Proof sketch]
    By induction on the structure of $e$. In fact, we must strengthen the statement to $\lang {\id e(\vec A)} = \lang {e(\vec A)}$ for all languages $\vec A$.
    The case when $e$ is some $a \in \Alphabet$ follows from the argument above.
    Every other case is immediate, with the $\mu$-case following by the strong $\mu$-congruence principle in $\Lang$: $\forall x\,    e(x) =f(x) \limp \mu x e(x) = \mu x f(x)$.
\end{proof}

Moreover, $\id \cdot$ induces a bona fide interpretation of identity-free $\mu$-semirings:

\begin{proposition}
\label{idem-trans-is-an-interp}
    $\basicaxidfree\proves e=f \implies \basicaxidfree \proves \id e = \id f$.
\end{proposition}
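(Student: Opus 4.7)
The plan is to proceed by a routine induction on the derivation of $e = f$ in $\basicaxidfree$, showing that each axiom and each inference rule is preserved under the translation $\id\cdot$. The key technical prerequisite is a substitution lemma, which I would establish first by structural induction on $e$: for all expressions $e, g$ and variables $z$,
\[
\id{e[z := g]} \ = \ \id e\, [z := \id g]
\]
as syntactic expressions, working modulo $\alpha$-equivalence of bound variables. The cases for variables, $0$, $+$, $\cdot$ and $\mu$ are immediate from the recursive clauses of $\id\cdot$, which commute with all these operators; the atomic case is handled by the observation that $\id a = \mu x(a + \mu y(x+y))$ is a \emph{closed} expression, so after $\alpha$-renaming its binders away from $z$ the substitution on it is vacuous.

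With the substitution lemma in hand, the inductive step on the equational derivation is essentially mechanical. Reflexivity, symmetry and transitivity are preserved trivially. For congruence under $+$ and $\cdot$, since $\id\cdot$ commutes with each of these operators, the translated rule is just the same congruence applied to the translated premises; and, crucially, $\basicaxidfree$ does \emph{not} admit $\mu$-congruence (per \cref{rem:mu-congruence}), so no such case need be addressed. Each semiring axiom $e = f$ translates under $\id\cdot$ into the same axiom applied to the $\id\cdot$-translates of its subexpressions, hence is again an axiom instance and follows by reflexivity. Finally, the fixed point axiom $\mu x e(x) = e(\mu x e(x))$ translates into $\mu x \id e \ = \ \id e \, [x := \mu x \id e]$ --- the left-hand side by commutation of $\id\cdot$ with $\mu$, and the right-hand side by the substitution lemma --- which is precisely the fixed point axiom instantiated at the expression $\id e$.

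The only step that requires real care is the bookkeeping of bound variables in the substitution lemma: because $\id a$ is defined using the specific binders $x$ and $y$, one must be explicit that these are implicitly $\alpha$-renamed whenever there is a risk of clash with the free variable being substituted for, or with free variables of $g$. Once this is dispatched, no further case is delicate, and the argument goes through as a standard preservation-of-provability argument for a syntactic interpretation. It is worth flagging that the absence of $\mu$-congruence in $\basicaxidfree$ is what makes this clean: we never need to justify that ``replacing equals under $\mu$'' is sound, only that the axiom instances and the non-$\mu$ congruences are respected.
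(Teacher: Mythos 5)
Your proof is correct and takes essentially the same approach as the paper, which simply asserts that replacing every expression by its $\id\cdot$-translate preserves axioms and rules and calls the verification routine; your substitution lemma $\id{e[z := g]} = \id e\,[z := \id g]$ (with $\id a$ closed, so substitution into it is vacuous) is precisely the detail needed to make the fixed point axiom case go through.
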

\noindent
To see this, simply replace every expression $e $ in a $\basicaxidfree$ proof by $\id e$. It remains to verify that the axioms and rules remain correct, which is routine.

Finally, thanks to the fact that idempotent elements of a semiring form an ideal, cf.~\cref{rem:idempotents-form-ideal}, we have idempotency of the image of $\id \cdot $ on productive expressions provably in $\basicaxidfree$:

\begin{lemma}
\label{idem-trans-is-idem-on-prod-exprs}
    If $e(\vec x)$ is productive then $\basicaxidfree \proves \id e(\vec f) + \id e(\vec f) = \id e(\vec f)$ for all $\vec f$.
\end{lemma}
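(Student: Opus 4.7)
My plan is to proceed by induction on the structure of the productive expression $e(\vec x)$, leveraging two standing facts. First, the base elements of $\id\cdot$ are already idempotent: for the constant $0$ we have $0+0=0$ by the semiring axioms, and for atoms $a\in\Alphabet$ the expression $\id a = a'$ was shown to satisfy $a' + a' = a'$ in $\basicaxidfree$ in the calculation displayed immediately before the definition of $\id\cdot$. Second, by \cref{rem:idempotents-form-ideal}, the idempotents of any semiring form an ideal, and the closure properties involved are themselves equational consequences of the semiring axioms: if $i+i=i$ and $j+j=j$ then $(i+j)+(i+j) = i+j$ by commutativity and associativity, and for any $g$ we have $gi+gi = g(i+i) = gi$ and $ig+ig = (i+i)g = ig$ by distributivity. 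All these identities are provable in $\basicaxidfree$.

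The inductive cases for $p+q$, $pe$, and $ep$ follow immediately by combining the inductive hypothesis on the productive subexpression(s) with the ideal closure properties above. For example in the case $e = p \cdot f$ with $p$ productive and $f$ arbitrary, the IH yields $\basicaxidfree \proves \id p(\vec f) + \id p(\vec f) = \id p(\vec f)$, and then right-multiplying by $\id f(\vec f)$ and using distributivity gives $\id p(\vec f)\id f(\vec f) + \id p(\vec f)\id f(\vec f) = \id p(\vec f)\id f(\vec f)$, which is exactly $\id{(pf)}(\vec f) + \id{(pf)}(\vec f) = \id{(pf)}(\vec f)$ by definition of $\id\cdot$.

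The one case I expect to require a moment of thought is $e = \mu X p$ with $p$ productive in $X$ (and the other free variables $\vec x$). Here, writing $r \df \id{(\mu X p)}(\vec f) = \mu X \id p(X,\vec f)$, the fixed point axiom \eqref{item:fixed-point-axioms} gives $r = \id p(r,\vec f)$ in $\basicaxidfree$. Instantiating the inductive hypothesis for $p$ at the tuple $(r,\vec f)$ (which is legitimate since the IH is universally quantified over all expressions to be substituted for the free variables) yields that $\id p(r,\vec f)$ is idempotent in $\basicaxidfree$, and hence so is $r$. Note that we do \emph{not} perform any reasoning \emph{underneath} the $\mu$ binder, only unfolding it once via axiom \eqref{item:fixed-point-axioms}; this is important in light of \cref{rem:mu-congruence}. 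This completes the induction and establishes the lemma.
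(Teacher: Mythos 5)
Your proof is correct and follows essentially the same route as the paper's: structural induction over the grammar of productive expressions, with the base cases handled by the semiring axioms and the explicit idempotency calculation for $a'$, the additive and multiplicative cases by the ideal closure identities, and the $\mu$ case by a single unfolding via axiom \eqref{item:fixed-point-axioms} followed by the inductive hypothesis instantiated at the unfolded fixed point. Your explicit remarks that the instantiation is licensed by the universal quantification over $\vec f$ and that no reasoning occurs under the binder are correct and match the paper's (implicit) treatment.
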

\begin{proof}
    By induction on the structure of $e(\vec x)$, according to the grammar of productive expressions in \eqref{eq:productive-expressions}.
    Working in $\basicaxidfree$ we have:
    \begin{itemize}
        \item If $e(\vec x)$ is $0$ then: 
        \[
        \begin{array}{r@{\ = \ }ll}
             \id 0 + \id 0 & 0 + \id 0 & \text{by definition of $\id 0$} \\
                & \id 0 & \text{since $0$ is an additive unit}
        \end{array}
        \]
        \item If $e(\vec x)$ is some $a\in \Alphabet$ then $\id a + \id a = \id a$ by the argument at the beginning of this subsection.
        \item If $e(\vec x)$ is $p(\vec x) +  q(\vec x) $ then:
        \[
        \begin{array}{rll}
        & (\id p(\vec f) + \id q(\vec f)) + (\id p(\vec f) + \id q(\vec f)) & 
        \\
          =& (\id p(\vec f) + \id p(\vec f)) + (\id q(\vec f) + \id q(\vec f)) & \text{by associativity and commutativity of $+$} \\
            =& \id p(\vec f) + \id q(\vec f) & \text{by inductive hypothesis for $p(\vec x)$ and $q(\vec x)$} 
        \end{array}
        \]
        \item If $e(\vec x)$ is $p(\vec x)g(\vec x)$ then:
        \[
        \begin{array}{rll}
        &  \id p(\vec f)\id g(\vec f) + \id p(\vec f) \id g(\vec f) & 
        \\
          =  & (\id p(\vec f) + \id p(\vec f))\id g(\vec f) & \text{by distributivity} \\
           = & \id p(\vec f) \id g(\vec f) & \text{by inductive hypothesis for $p(\vec x)$} 
        \end{array}
        \]
        \item If $e(\vec x)$ is $g(\vec x)p(\vec x)$ then the argument is symmetric to above.
        \item If $e(\vec x)$ is $\mu x p(x,\vec x)$ then:
        \[
        \begin{array}{rll}
             & \mu x \id p(x,\vec f) + \mu x \id p(x,\vec f) 
             \\
              =  & \id p (\mu x \id p(x,\vec f),\vec f) + \id p (\mu x \id p(x,\vec f), \vec f) & \text{by $\mu$-unfolding, axiom \eqref{item:fixed-point-axioms}} \\
              = & \id p (\mu x \id p(x,\vec f),\vec f) & \text{by inductive hypothesis for $p(x,\vec x)$} \qedhere
        \end{array}
        \]
    \end{itemize}
\end{proof}

Note that, the above result notwithstanding, $\id e$ is typically not productive, even if $e$ is. In particular $\id a$ is not productive, even though $a$ is.

    Notice that, \cref{idem-trans-is-an-interp} notwithstanding, $\id \cdot$ does not necessarily interpret $\basicaxidfree + \idem$ into $\basicaxidfree$. This is because, a priori, a proof in $\basicaxidfree+\idem$ may exploit idempotency of expressions not in the image of $\id \cdot $ on productive expressions.
    For this reason, in what follows,
    we shall exploit the fact that we carefully kept track of the use of idempotency in our arguments in the previous section.

\subsection{Undecidability of $\basicaxidfree$ (and extensions)}
Recall $\top \df \mu x\left(\sum \Alphabet + \sum\limits_{a\in \Alphabet} ax\right)$.
By a similar argument to \cref{reg-tot-provable-with-idem} in the previous section we can obtain:

\begin{proposition}
\label{reg-tot-prov-under-idem-trans}
    Let $G \in \regtot$ and let $e_X$ be productive solutions of each of its non-terminals $X$ in $\basicaxidfree$, cf.~\cref{prod-cfg-has-prod-solns}, with $e_T = \top$. 
    Then $\basicaxidfree \proves \id\top \leq \id{e_S}$. 
\end{proposition}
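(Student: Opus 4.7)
The plan is to imitate the proof of \cref{reg-tot-provable-with-idem} but under the idempotent interpretation $\id{\cdot}$, replacing every expression $e$ appearing there by $\id{e}$. The key observation is that each expression whose idempotency was invoked in the original argument---namely $e_Y$, $\sum_{a \in \Alphabet} a e_Y \top$, and $e_S$---is a productive expression: by \cref{prod-cfg-has-prod-solns} the solutions $e_Y$ and $e_S$ are productive, and productive expressions are closed under multiplication by arbitrary expressions on either side (cf.~\eqref{eq:productive-expressions}), so $\sum_{a \in \Alphabet} a e_Y \top$ is productive too. Consequently, by \cref{idem-trans-is-idem-on-prod-exprs}, the images $\id{e_Y}$, $\id{(\sum_a a e_Y \top)}$, and $\id{e_S}$ are all idempotent in $\basicaxidfree$, which is what will allow the replay to go through.

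First I would re-establish the two preparatory lemmas under $\id{\cdot}$. For the unfolding lemma, inspection of the proof of \cref{lem:unfolding-top} shows that it uses only the fixed point axiom and distributivity, so the same proof goes through already in $\basicaxidfree$; applying the interpretation then yields, via \cref{idem-trans-is-an-interp},
\[
\basicaxidfree \proves \id{\top} \ = \ \sum_{|\vec a| =1}^{n} \id{\vec a} + \sum_{|\vec a|= n} \id{\vec a}\, \id{\top} .
\]
For membership, I would reprove \cref{lem:membership} directly in $\basicaxidfree$: if $X$ generates $\vec a$ then $\basicaxidfree \proves \id{\vec a} \leq \id{e_X}$. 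The same induction on derivation trees works, using that each product of terminals $\vec a$ is itself a productive expression (hence $\id{\vec a}$ is idempotent by \cref{idem-trans-is-idem-on-prod-exprs}) and that $\id{e_X}$ is idempotent (since the productive solution $e_X$ is productive) to invoke the order-theoretic properties of \cref{props-of-nat-order}.

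With these in hand, the proof of \cref{reg-tot-provable-with-idem} transcribes essentially verbatim: each of the three invocations of idempotency in that proof becomes an application of \cref{idem-trans-is-idem-on-prod-exprs} to the relevant productive expression, and the final conclusion $\top \leq e_S$ becomes $\id{\top} \leq \id{e_S}$ in $\basicaxidfree$, as required.

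The main obstacle, such as it is, is bookkeeping: checking that every use of idempotency or of the monotonicity/transitivity properties from \cref{props-of-nat-order,intro-elim-props-for-+} in the original argument appeals to an element whose productivity---and hence whose idempotency after applying $\id{\cdot}$---is available. The careful tracking of idempotency uses in the previous section (marked in purple there) makes this essentially mechanical, and no genuinely new ideas beyond \cref{idem-trans-is-idem-on-prod-exprs} are required.
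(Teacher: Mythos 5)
Your proposal is correct and follows essentially the same route as the paper: replay the proof of \cref{reg-tot-provable-with-idem} under $\id{\cdot}$, substituting \cref{lem:membership-productive,lem:unfolding-top-no-id} for \cref{lem:membership,lem:unfolding-top}, and discharge each (purple-marked) use of idempotency via \cref{idem-trans-is-idem-on-prod-exprs} applied to a productive expression. Your explicit check that $\sum_{a \in \Alphabet} a e_Y \top$ is productive (by closure of productive expressions under one-sided products) is a detail the paper leaves implicit, but nothing more is needed.
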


For this we need appropriate versions of the two intermediate results we employed, \cref{lem:membership,lem:unfolding-top}, in the previous section.
Their proofs are morally similar to the previous ones, but we must be careful to state them appropriately in the absence of idempotency, exploiting the refined results of this section on productive CFGs and $\basicaxidfree$
\begin{lemma}
\label{lem:membership-productive}
    Fix a productive CFG $G$ and write $e_X$ for (any) corresponding productive solutions in $\basicaxidfree$ of each non-terminal $X$ of $G$, cf.~\cref{prod-cfg-has-prod-solns}.
    If $\vec a \in \lang X$ in $G$ then $\basicaxidfree \proves \id{\vec a} \leq \id e_X$.
\end{lemma}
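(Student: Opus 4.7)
The plan is to induct on the derivation tree of $\vec a$ from $X$ in $G$, mirroring \cref{lem:membership} but carefully substituting appeals to $\idem$ with appeals to \cref{idem-trans-is-idem-on-prod-exprs}, which supplies idempotency of $\id{p}$ for any productive $p$. The key insight is that \cref{prod-cfg-has-prod-solns} gives productive solutions $e_X$, and the right-hand sides of productive CFG productions are themselves productive expressions, so every element we shall need to be idempotent in the argument lies in the image of $\id{\cdot}$ on some productive expression.

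For the inductive step, consider a derivation beginning with a production $X \Rightarrow \alpha_1 \cdots \alpha_m$ of $G$, where each $\alpha_k$ is either a terminal or a non-terminal. Then $\vec a = w_1 \cdots w_m$, where $w_k = \alpha_k$ when $\alpha_k \in \Alphabet$ and otherwise $\alpha_k \in \Var$ generates $w_k$ by a strictly smaller derivation. Setting $\beta_k \df e_{\alpha_k}$ for $\alpha_k \in \Var$ and $\beta_k \df \alpha_k$ otherwise, the inductive hypothesis gives $\basicaxidfree \proves \id{w_k} \leq \id{\beta_k}$ for every $k$ (the terminal case being trivial, the non-terminal case by IH). Since $\id{\cdot}$ commutes with $\cdot$ and monotonicity of products requires no idempotency (cf.~\cref{props-of-nat-order}), repeated application of monotonicity yields $\basicaxidfree \proves \id{\vec a} \leq \id{\beta_1 \cdots \beta_m}$.

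It remains to show $\basicaxidfree \proves \id{\beta_1 \cdots \beta_m} \leq \id{e_X}$. Because $G$ is productive some $\alpha_k$ is in $\Alphabet$, and since each $e_Y$ is productive by \cref{prod-cfg-has-prod-solns}, the product $\beta_1 \cdots \beta_m$ is itself productive according to the grammar in \eqref{eq:productive-expressions}. By \cref{prod-cfg-has-prod-solns} we have $e_X = \beta_1 \cdots \beta_m + g$ in $\basicaxidfree$ for some $g$ collecting the remaining productions for $X$, and since $\id{\cdot}$ commutes with $+$ this gives $\id{e_X} = \id{\beta_1 \cdots \beta_m} + \id{g}$. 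Now \cref{idem-trans-is-idem-on-prod-exprs} furnishes idempotency of $\id{\beta_1 \cdots \beta_m}$, and the idempotency-conditional form of \cref{intro-elim-props-for-+} then yields $\id{\beta_1 \cdots \beta_m} \leq \id{e_X}$. Transitivity of $\leq$ closes the inductive step.

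The main obstacle is essentially bookkeeping: every appeal to idempotency must be justified by \cref{idem-trans-is-idem-on-prod-exprs}, i.e.\ the element in question must be the $\id{\cdot}$-image of a productive expression. The grammar in \eqref{eq:productive-expressions} is designed precisely to be closed under products whenever at least one factor is productive; this, combined with productivity of every RHS in $G$ and of the solutions $e_Y$ supplied by \cref{prod-cfg-has-prod-solns}, ensures the argument goes through without needing $\idem$ globally.
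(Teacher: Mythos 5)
Your proof is correct and follows essentially the same route as the paper: induction on the derivation tree, with every appeal to idempotency discharged via \cref{idem-trans-is-idem-on-prod-exprs} applied to the $\id{\cdot}$-image of a productive expression. You are in fact slightly more explicit than the paper's version (which simply defers the inductive step to \cref{lem:membership}), usefully spelling out that the production term $\beta_1\cdots\beta_m$ is itself productive by productivity of $G$, which is exactly what licenses the final step $\id{(\beta_1\cdots\beta_m)} \leq \id{e_X}$.
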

\begin{proof}
    By induction on the derivation tree of $\vec a $ from $X$ in $G$. Working in $\basicaxidfree$:
    \begin{itemize}
        \item If $X\to \vec a $ then we have $\vec a + g = e_X$ for some $g$ by \cref{prod-cfg-has-prod-solns}.
        Thus $\id{\vec a} + \id g = \id e_X$ by \cref{idem-trans-is-an-interp}, and so $\id{\vec a} \leq \id{e_X}$ by idempotency of $\id{\vec a}$, \cref{idem-trans-is-idem-on-prod-exprs}.
        \item The inductive step is the same as in the proof of \cref{lem:membership}. \qedhere
    \end{itemize}
\end{proof}

\begin{lemma}
\label{lem:unfolding-top-no-id}
    For $n>0$, we have $\basicaxidfree \proves \id\top = \sum\limits_{|\vec a|=1}^{n}\id{\vec a} + \sum\limits_{|\vec a|= n} \id{\vec a} \id\top$.
\end{lemma}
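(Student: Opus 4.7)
The plan is to proceed by induction on $n$, exactly paralleling the proof of \cref{lem:unfolding-top}, but working with the idempotent interpretation throughout. The crucial observation enabling this is that $\id\cdot$ commutes with all connectives except the letters, so in particular $\id\top = \mu x\bigl(\sum_{a\in\Alphabet}\id a + \sum_{a\in\Alphabet}\id a\,x\bigr)$ and $\id{\vec a \cdot b} = \id{\vec a} \cdot \id b$ for any word $\vec a$ and letter $b$. Thus $\id\top$ is itself a fixed point of the expected operator, and the identity-free calculation essentially coincides with the calculation in the previous lemma, just with each letter $a$ replaced by $\id a$.

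For the base case $n = 1$, I would simply unfold $\id\top$ via the fixed point axiom \eqref{item:fixed-point-axioms}, yielding $\id\top = \sum_{a\in\Alphabet}\id a + \sum_{a\in\Alphabet}\id a\,\id\top$, which is exactly the claim for $n=1$ (since words of length $1$ are just letters $a \in \Alphabet$).

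For the inductive step, assume $\id\top = \sum_{|\vec a|=1}^{n}\id{\vec a} + \sum_{|\vec a|= n}\id{\vec a}\,\id\top$. I would rewrite the rightmost occurrence of $\id\top$ using the base case, then apply distributivity twice to push $\id{\vec a}$ into both inner sums, then use $\id{\vec a}\cdot\id b = \id{\vec a\cdot b}$ to reindex $\sum_{|\vec a|=n}\id{\vec a}\sum_{b\in\Alphabet}\id b$ as $\sum_{|\vec a|=n+1}\id{\vec a}$ and similarly for the other summand. Finally I would merge $\sum_{|\vec a|=1}^{n}\id{\vec a} + \sum_{|\vec a|=n+1}\id{\vec a}$ into $\sum_{|\vec a|=1}^{n+1}\id{\vec a}$ by rebracketing.

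No step here is a real obstacle; the only thing requiring care is verifying that idempotency is not invoked anywhere, since the ambient theory is $\basicaxidfree$ without $\idem$. Inspection confirms this: the argument uses only associativity, commutativity of $+$, distributivity, the fixed point axiom, and the homomorphic clauses of $\id\cdot$. This is consistent with the fact that the original \cref{lem:unfolding-top} was also established in $\basicax$ alone, without appeal to $\idem$.
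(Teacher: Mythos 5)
Your proof is correct, and the key observations (that $\id\cdot$ commutes with $\mu$, $+$, $\cdot$, so $\id\top$ is a fixed point of the ``translated'' operator, and that $\id{\vec a\cdot b}=\id{\vec a}\cdot\id b$) are exactly what make the induction go through. The paper, however, organizes this differently and more economically: it notes that the proof of \cref{lem:unfolding-top} uses no identity axioms and hence already yields $\basicaxidfree\proves \top = \sum_{|\vec a|=1}^{n}\vec a + \sum_{|\vec a|=n}\vec a\,\top$, and then simply pushes this provable equation through the interpretation $\id\cdot$ via \cref{idem-trans-is-an-interp}, which transfers any $\basicaxidfree$-provable equation $e=f$ to $\id e=\id f$. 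Your approach inlines that interpretation step by re-running the induction directly on the translated expressions; this is equally valid and has the minor virtue of making explicit that no idempotency is used at the level of the translated terms, but it duplicates work that the interpretation proposition already packages. Either way the content is the same, and your verification that only the semiring axioms, the fixed-point axiom, and the homomorphic clauses of $\id\cdot$ are needed is accurate.
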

\begin{proof}
    The proof of \cref{lem:unfolding-top} does not use any identity axioms, and so goes through already in $\basicaxidfree$.
    The result now follows by \cref{idem-trans-is-an-interp}.
\end{proof}

We are now ready to prove the result we stated at the beginning of this section:

\begin{proof}
    [Proof of \cref{reg-tot-prov-under-idem-trans}]
    The proof goes through just like the proof of \cref{reg-tot-provable-with-idem}, only replacing all (productive) expressions by their image under $\id \cdot$.
    All appeals to \cref{lem:membership,lem:unfolding-top} are replaced by \cref{lem:membership-productive,lem:unfolding-top-no-id} respectively.
    Note that, by inspection of the proof, the only uses of idempotency (marked in \purple{purple}) are on productive expressions, whose image under $\id \cdot$ is indeed provably idempotent in $\basicaxidfree$ by \cref{idem-trans-is-idem-on-prod-exprs}.
    In particular, the expressions $e_Y$ and $e_S$ are productive by assumption.
\end{proof}

\subsection{Putting it together}
Finally we arrive at our promised strengthening of \cref{thm:undecidability-with-idem}:
\begin{theorem}
\label{thm:undecidability-without-idem}
    Any r.e.\ extension of $\basicaxidfree$ sound for $\Lang$ is $\Sigma^0_1$-complete.
\end{theorem}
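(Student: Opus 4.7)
The plan is to mimic the proof of \cref{thm:undecidability-with-idem}, only replacing all appeals to idempotency by the idempotent interpretation $\id{\cdot}$ developed in this section. The key point is that, while $\basicaxidfree$ lacks idempotency as an axiom, \cref{idem-trans-is-idem-on-prod-exprs} tells us that $\id{\cdot}$ forces idempotency on (images of) productive expressions, and \cref{reg-tot-prov-under-idem-trans} already packages the regular totality argument under this interpretation.

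Concretely, I would fix, for each productive CFG $G$, productive solutions $e^G_X$ of its non-terminals in $\basicaxidfree$ via \cref{prod-cfg-has-prod-solns}, always taking $e^G_T = \top$. Let $\mathsf U \supseteq \basicaxidfree$ be an r.e.\ theory with $\Lang \models \mathsf U$, and define the analogue of $\total{\mathsf U}$ for the identity-free setting:
\[
\total{\mathsf U} \df \{ G \text{ productive} : \mathsf U \proves \id\top \leq \id{e^G_S} \}
\]
Clearly $\total{\mathsf U}$ is r.e., since $\mathsf U$ is, and there is a computable reduction from $\total{\mathsf U}$ to $\mathsf U$ by construction.

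Next I would verify the two separation properties needed to invoke \cref{separator-of-eff-insep-sets-is-S01-complete,regtot-nottot-eff-insep}. First, $\regtot \subseteq \total{\mathsf U}$ is immediate from \cref{reg-tot-prov-under-idem-trans} since $\basicaxidfree \subseteq \mathsf U$. Second, $\total{\mathsf U} \cap \nottot = \emptyset$: if $G \in \nottot$ then $\lang{e^G_S} \subsetneq \lang{\top}$ in $\Lang$, and by \cref{idem-trans-equiv-in-lang} we have $\lang{\id{e^G_S}} = \lang{e^G_S}$ and $\lang{\id\top} = \lang\top$, so $\Lang \not\models \id\top \leq \id{e^G_S}$; by soundness, $\mathsf U \not\proves \id\top \leq \id{e^G_S}$, hence $G \notin \total{\mathsf U}$.

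Putting it together, $\total{\mathsf U}$ is an r.e.\ set separating the effectively inseparable pair $(\regtot, \nottot)$, and so is $\Sigma^0_1$-complete; the reduction to $\mathsf U$ then transfers $\Sigma^0_1$-hardness to $\mathsf U$, while r.e.-ness of $\mathsf U$ places it in $\Sigma^0_1$. There is no real obstacle here beyond bookkeeping: all the heavy lifting has been done in proving \cref{reg-tot-prov-under-idem-trans}, where the careful tracking of idempotency uses (highlighted in purple in \cref{reg-tot-provable-with-idem}) ensured that every idempotency step applied only to productive expressions, which become provably idempotent after applying $\id{\cdot}$.
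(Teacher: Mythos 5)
Your proposal is correct and follows essentially the same route as the paper's own proof: the same definition of $\total{\mathsf U}$ via the $\id\cdot$ translation, the same two separation properties established via \cref{reg-tot-prov-under-idem-trans} and \cref{idem-trans-equiv-in-lang}, and the same conclusion via \cref{separator-of-eff-insep-sets-is-S01-complete,regtot-nottot-eff-insep}. The only difference is that you spell out the soundness direction ($\total{\mathsf U}\cap\nottot=\emptyset$) in slightly more detail than the paper does.
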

The proof is similar to that of \cref{thm:undecidability-with-idem}, only being more careful about handling the $\id\cdot$ translation.
\begin{proof}
    Like in the proof of \cref{thm:undecidability-with-idem}, let us assume that all CFGs have start symbol $S$ and write $e^G_X$ for the productive solutions of nonterminals $X$ in a productive grammar $G$ obtained by \cref{prod-cfg-has-prod-solns}.
    Let $\mathsf U \supseteq \basicaxidfree$ be r.e.\ with $\Lang \models \mathsf U$.
    Now we write $\total{\mathsf U} \df \{G \text{ productive}: \mathsf U \proves \id \top \leq \id{(e^G_S)}\}$. We have:
    \begin{itemize}
        \item $\total {\mathsf U} \cap \nottot = \emptyset$, by assumption that $\Lang \models \mathsf U $ and by \cref{idem-trans-equiv-in-lang}.
        \item $\regtot \subseteq \total{\mathsf U}$, since $\basicaxidfree\subseteq \mathsf U$ and by \cref{reg-tot-prov-under-idem-trans}.
    \end{itemize}
    Thus by \cref{separator-of-eff-insep-sets-is-S01-complete,regtot-nottot-eff-insep} we have that $\total{\mathsf U}$ is $\Sigma^0_1$-complete.
    Again, by the obvious computable reduction from $\total{\mathsf U}$ to $\mathsf U$, we conclude as required. 
\end{proof}

\begin{corollary}
    The equational theories of Conway $\mu$-semirings and Park $\mu$-semirings are undecidable, and moreover $\Sigma^0_1$-complete.
    The same holds even for identity-free versions of these theories.
\end{corollary}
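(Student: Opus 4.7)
The plan is to reduce each case---Conway $\mu$-semirings, Park $\mu$-semirings, and their identity-free variants---to \cref{thm:undecidability-without-idem} by verifying its three hypotheses for the relevant equational theory $\mathsf U$: that $\mathsf U$ is r.e., extends $\basicaxidfree$, and is sound for $\Lang$.

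For the identity-free versions, all three conditions are essentially immediate from the definitions recalled in \cref{sec:prelims}. The identity-free Conway and Park theories are axiomatised by $\basicaxidfree$ together with strong $\mu$-congruence and further axioms (the two Conway identities, or the induction scheme $\ind$ respectively); each such axiomatisation is r.e., extends $\basicaxidfree$ (since in either case the fixed point axiom is already a consequence, as recorded in \cref{sec:prelims}), and is sound for $\Lang$, the latter since the Knaster--Tarski fixed point theorem makes $\lang{\mu x e(x)}$ the least prefixed point of $A\mapsto \lang{e(A)}$, which in turn validates the Conway identities, strong $\mu$-congruence, and the induction scheme. Hence \cref{thm:undecidability-without-idem} applies directly to yield $\Sigma^0_1$-completeness in both cases.

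For the versions with identity, a small additional step is required since $1$ lies outside the language of $\basicaxidfree$. Given $\mathsf U$ the Conway or Park equational theory over the full language, I would pass to its $1$-free fragment $\mathsf U_0$, which is still r.e., extends $\basicaxidfree$, and remains sound for $\Lang$; by the identity-free case, \cref{thm:undecidability-without-idem} already makes $\mathsf U_0$ $\Sigma^0_1$-complete. Since $\mathsf U_0 \subseteq \mathsf U$ provides a trivial computable many-one reduction, $\mathsf U$ inherits $\Sigma^0_1$-hardness, and being r.e.\ lies in $\Sigma^0_1$, hence is $\Sigma^0_1$-complete. The only bookkeeping concern---not really an obstacle---is checking that soundness of $\mathsf U$ for $\Lang$ automatically restricts to soundness of $\mathsf U_0$ on $1$-free instances, which is immediate. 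With this, all four parts of the corollary follow uniformly.
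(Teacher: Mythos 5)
Your proposal is correct and matches the paper's (implicit) argument: the corollary is stated without proof precisely because all four theories are r.e.\ extensions of $\basicaxidfree$ sound for $\Lang$, so \cref{thm:undecidability-without-idem} applies directly. Your extra step of passing to the $1$-free fragment for the with-identity case is harmless bookkeeping rather than a different route, since the theorem's proof only needs $\mathsf U$ to contain $\basicaxidfree$ and be sound for $\Lang$, regardless of whether $1$ occurs in $\mathsf U$'s language.
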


\section{Conclusions}
\label{sec:concs}
In this work we addressed the question of (un)decidability of the equational theories of various classes of semirings with fixed points. 
Using recursion theoretic methods we showed that a large class of such theories are undecidable.
These results account for many of the $\mu$-semiring theories proposed in previous works, cf.~\cite{Leiss92:ka-with-recursion,EsikLeiss02,EsikLeiss05,GHK13}.

Conway semirings were proposed in \cite{EsikLeiss02,EsikLeiss05} as a basic finite equational theory that is nonetheless powerful enough to carry out reduction to Greibach normal form.
Our basic system $\basicax$ is weaker/more general than Conway semirings, and is also finitely equationally axiomatised. 
It would be interesting to examine if it may have any further utility, other than as a lower bound for the range of our undecidability results herein.
For instance, are there results of formal language theory that are validated already in $\basicax$?

Finally our results do not cover pure $\mu$-semirings, where $\mu x e(x)$ is not necessarily a fixed point. The (un)decidability of its equational theory is, as far as we know, open.

\section*{Acknowledgements}
The (alphabetically) first two authors have been supported by a UKRI Future Leaders Fellowship, Structure vs
Invariants in Proofs, project number MR/S035540/1. The work of the third author was performed at the Steklov International Mathematical Center and supported by the Ministry of Science and Higher Education of the Russian Federation (agreement no. 075-15-2025-303).

\bibliographystyle{alpha}
\bibliography{kleene}

@book{Kozen1997,
    author = {Dexter Kozen},
    title = {Automata and Complexity},
    publisher = {Springer-Verlag},
    address = {New York},
    year = {1997}
}

@misc{EsikKuich12:mod-aut-theory,
  title={Modern automata theory},
  author={{\'E}sik, Zolt{\'a}n and Kuich, Werner},
  year={2012}
}

@InProceedings{DasDe24:omega-cfls,
author="Das, Anupam
and De, Abhishek",
editor="Benzm{\"u}ller, Christoph
and Heule, Marijn J.H.
and Schmidt, Renate A.",
title="A Proof Theory of ($\omega$-)Context-Free Languages, via Non-wellfounded Proofs",
booktitle="Automated Reasoning",
year="2024",
publisher="Springer Nature Switzerland",
address="Cham",
pages="237--256",
isbn="978-3-031-63501-4"
}

@article{Kozen94,
title = {A Completeness Theorem for {K}leene Algebras and the Algebra of Regular Events},
journal = {Information and Computation},
volume = {110},
number = {2},
pages = {366-390},
year = {1994},
issn = {0890-5401},
doi = {https://doi.org/10.1006/inco.1994.1037},
url = {https://www.sciencedirect.com/science/article/pii/S0890540184710376},
author = {Dexter Kozen},
}

@inproceedings{Kuznetsov19:action-logic,
author = {Kuznetsov, Stepan},
title = {The logic of action lattices is undecidable},
year = {2019},
publisher = {IEEE Press},
abstract = {We prove algorithmic undecidability of the (in)equational theory of residuated Kleene lattices (action lattices), thus solving a problem left open by D. Kozen, P. Jipsen, W. Buszkowski.},
booktitle = {Proceedings of the 34th Annual ACM/IEEE Symposium on Logic in Computer Science},
articleno = {5},
numpages = {9},
location = {Vancouver, Canada},
series = {LICS '19}
}

@InProceedings{DKPP19:ka-with-hyps,
author="Doumane, Amina
and Kuperberg, Denis
and Pous, Damien
and Pradic, C{\'e}cilia",
editor="Boja{\'{n}}czyk, Miko{\l}aj
and Simpson, Alex",
title="Kleene Algebra with Hypotheses",
booktitle="Foundations of Software Science and Computation Structures",
year="2019",
publisher="Springer International Publishing",
address="Cham",
pages="207--223",
isbn="978-3-030-17127-8"
}

@article{Kozen02:complexity-ka,
title = {On the Complexity of Reasoning in {K}leene Algebra},
journal = {Information and Computation},
volume = {179},
number = {2},
pages = {152-162},
year = {2002},
issn = {0890-5401},
doi = {https://doi.org/10.1006/inco.2001.2960},
url = {https://www.sciencedirect.com/science/article/pii/S0890540101929608},
author = {Dexter Kozen}
}

@Incollection{Bekic84,
author="Beki{\'{c}}, Hans",
editor="Jones, C. B.",
title="Definable operations in general algebras, and the theory of automata and flowcharts",
bookTitle="Programming Languages and Their Definition: H. Beki{\v{c}} (1936--1982)",
year="1984",
publisher="Springer Berlin Heidelberg",
address="Berlin, Heidelberg",
pages="30--55",
abstract="We study the class of operations definable from the given operations of an algebra of sets by union, composition, and fixed points; we obtain two theorems on definable operations that give us as special case the regular-equals-recognisable theorem of generalised finite automata theory. Definable operations arise also as the operations computable by charts; by translating into predicate logic, we obtain Manna's formulas for termination and correctness of flowcharts.",
isbn="978-3-540-38933-0",
doi="10.1007/BFb0048939",
url="https://doi.org/10.1007/BFb0048939"
}

@book{HopcroftUllman79:book,
  added-at = {2009-05-20T01:29:36.000+0200},
  author = {Hopcroft, John E. and Ullman, Jeff D.},
  publisher = {Addison-Wesley Publishing Company},
  timestamp = {2009-05-20T01:29:37.000+0200},
  title = {Introduction to Automata Theory, Languages, and Computation},
  year = 1979
}

@book{Sipser96:book,
author = {Sipser, Michael},
title = {Introduction to the Theory of Computation},
year = {1996},
isbn = {053494728X},
publisher = {International Thomson Publishing},
edition = {1st}
}

@article{Kuznetsov2021TOCL,
    author = "Stepan Kuznetsov",
    title = "Action logic is undecidable",
    journal = "ACM Transactions on Computational Logic",
    year = "2021",
    volume = "22",
    number = "2",
    pages = "10, 26 pp."
}

@article{Kuznetsov2023JLC,
    author = "Stepan L. Kuznetsov", 
    title = "Commutative action logic", 
    journal = "Journal of Logic and Computation",
    volume = "33",
    number = "6",
    year = "2023", 
    pages = "1427--1462"
}

@article{GHK13,
author = {Grathwohl, Niels Bj\o{}rn Bugge and Henglein, Fritz and Kozen, Dexter},
title = {Infinitary Axiomatization of the Equational Theory of Context-Free Languages},
year = {2013},
publisher = {IOS Press},
address = {NLD},
volume = {150},
number = {3–4},
issn = {0169-2968},
url = {https://doi.org/10.3233/FI-2017-1469},
doi = {10.3233/FI-2017-1469},
abstract = {We give a natural complete infinitary axiomatization of the equational theory of the context-free languages, answering a question of Lei\ss{} (1992).},
journal = {Fundam. Inf.},
month = jan,
pages = {241–257},
numpages = {17},
keywords = {μ-semiring, Conway semirings, algebraically complete semirings, Kleene algebra, Context free languages}
}

@InProceedings{Leiss16,
  author =	{Leiss, Hans},
  title =	{The Matrix Ring of a $\mu$-Continuous {C}homsky Algebra is $\mu$-Continuous},
  booktitle =	{25th EACSL Annual Conference on Computer Science Logic (CSL 2016)},
  pages =	{6:1--6:15},
  series =	{Leibniz International Proceedings in Informatics (LIPIcs)},
  ISBN =	{978-3-95977-022-4},
  ISSN =	{1868-8969},
  year =	{2016},
  volume =	{62},
  editor =	{Talbot, Jean-Marc and Regnier, Laurent},
  publisher =	{Schloss Dagstuhl -- Leibniz-Zentrum f{\"u}r Informatik},
  address =	{Dagstuhl, Germany},
  URL =		{https://drops.dagstuhl.de/entities/document/10.4230/LIPIcs.CSL.2016.6},
  URN =		{urn:nbn:de:0030-drops-65467},
  doi =		{10.4230/LIPIcs.CSL.2016.6},
  annote =	{Keywords: context-free language, fixed point operator, idempotent semiring, matrix ring, Chomsky algebra}
}

@InProceedings{Leiss92:ka-with-recursion,
author="Lei{\ss}, Haas",
editor="B{\"o}rger, Egon
and J{\"a}ger, Gerhard
and Kleine B{\"u}ning, Hans
and Richter, Michael M.",
title="Towards Kleene Algebra with recursion",
booktitle="Computer Science Logic",
year="1992",
publisher="Springer Berlin Heidelberg",
address="Berlin, Heidelberg",
pages="242--256",
isbn="978-3-540-47285-8"
}

@article{Courcelle86,
title = {Equivalences and transformations of regular systems—Applications to recursive program schemes and grammars},
journal = {Theoretical Computer Science},
volume = {42},
pages = {1-122},
year = {1986},
issn = {0304-3975},
doi = {https://doi.org/10.1016/0304-3975(86)90050-2},
url = {https://www.sciencedirect.com/science/article/pii/0304397586900502},
author = {Bruno Courcelle},
}

@InProceedings{Hopkins08:alg-approach-1,
author="Hopkins, Mark",
editor="Berghammer, Rudolf
and M{\"o}ller, Bernhard
and Struth, Georg",
title="The Algebraic Approach I: The Algebraization of the Chomsky Hierarchy",
booktitle="Relations and Kleene Algebra in Computer Science",
year="2008",
publisher="Springer Berlin Heidelberg",
address="Berlin, Heidelberg",
pages="155--172",
isbn="978-3-540-78913-0"
}

@InProceedings{Hopkins08:alg-approach-2,
author="Hopkins, Mark",
editor="Berghammer, Rudolf
and M{\"o}ller, Bernhard
and Struth, Georg",
title="The Algebraic Approach II: Dioids, Quantales and Monads",
booktitle="Relations and Kleene Algebra in Computer Science",
year="2008",
publisher="Springer Berlin Heidelberg",
address="Berlin, Heidelberg",
pages="173--190",
isbn="978-3-540-78913-0"
}

@article{Gruska71:cfls-by-mu-exprs,
title = {A characterization of context-free languages},
journal = {Journal of Computer and System Sciences},
volume = {5},
number = {4},
pages = {353-364},
year = {1971},
issn = {0022-0000},
doi = {https://doi.org/10.1016/S0022-0000(71)80023-5},
url = {https://www.sciencedirect.com/science/article/pii/S0022000071800235},
author = {Jozef Gruska}
}

@InProceedings{EsikLeiss02,
author="{\'E}sik, Zolt{\'a}n
and Lei\ss, Hans",
editor="Bradfield, Julian",
title="Greibach Normal Form in Algebraically Complete Semirings",
booktitle="Computer Science Logic",
year="2002",
publisher="Springer Berlin Heidelberg",
address="Berlin, Heidelberg",
pages="135--150",
isbn="978-3-540-45793-0",
}

@article{EsikLeiss05,
title = {Algebraically complete semirings and Greibach normal form},
journal = {Annals of Pure and Applied Logic},
volume = {133},
number = {1},
pages = {173-203},
year = {2005},
note = {Festschrift on the occasion of Helmut Schwichtenberg’s 60th birthday},
issn = {0168-0072},
doi = {https://doi.org/10.1016/j.apal.2004.10.008},
url = {https://www.sciencedirect.com/science/article/pii/S0168007204001332},
author = {Zolt{\'a}n {\'E}sik and Hans Lei\ss},
keywords = {Greibach normal form, Context-free languages, Pre-fixed-point induction, Equational theory, Conway semiring, Kleene algebra, Algebraically complete semiring}
}

@article{Myhill1955,
   author = {John Myhill},
   title = {Creative sets},
   journal = {Zeitschrift f\"ur mathematische Logik und Grundlagen der Mathematik},
   volume = {1},
   pages = {97--108},
   year = {1955}
}

@book{Rogers1987,
    author = "Hartley Rogers",
    title = "Theory of Recursive Functions and Effective Computability",
    publisher = "MIT Press",
    year = "1987"
}

@article{Greibach1965,
    author = "Sheila A. Greibach",
    title = "A New Normal-Form Theorem for Context-Free Phrase Structure Grammars",
    journal = "Journal of the ACM",
    volume = "12", 
    number = "1",
    year = "1965",
    pages = "42--52"
}

\end{document}